\newcommand{\R}{{\mat R}}
\newcommand{\Z}{{\mat Z}}
\newcommand{\N}{{\mat N}}
\newcommand{\ds}{\displaystyle}
\newcommand{\no}{\nonumber}
\newcommand{\be}{\begin{eqnarray}}
\newcommand{\ben}{\begin{eqnarray*}}
\newcommand{\en}{\end{eqnarray}}
\newcommand{\enn}{\end{eqnarray*}}
\newcommand{\pa}{\partial}
\newcommand{\ov}{\overline}
\newcommand{\I}{{\rm Im}}
\newcommand{\Rt}{{\rm Re}}
\newcommand{\G}{\Gamma}
\newcommand{\vep}{\varepsilon}
\newcommand{\al}{\alpha}
\newcommand{\wid}{\widetilde}
\newcommand{\mat}{\mathbb}
\newtheorem{theorem}{Theorem}[section]
\newtheorem{remark}[theorem]{Remark}
\newtheorem{algorithm}{Algorithm}[section]
\newcommand{\high}[1] {{\color{black}{#1}}}
\begin{document}
\renewcommand{\theequation}{\arabic{section}.\arabic{equation}}

\title{\bf An FFT-based algorithm for efficient computation of Green's
functions for the Helmholtz and Maxwell's equations in periodic domains
}
\author{Bo Zhang\thanks{LSEC and Institute of Applied Mathematics, AMSS, Chinese Academy of Sciences,
Beijing, 100190, China and School of Mathematical Sciences, University of Chinese Academy of Sciences,
Beijing 100049, China ({\tt b.zhang@amt.ac.cn})}
\and
Ruming Zhang\thanks{Center for Industrial Mathematics, University of Bremen, Bremen 28359, Germany
({\sf rzhang@uni-bremen.de}); corresponding author}
}
\date{}
\maketitle

\begin{abstract}
The integral equation method is widely used in numerical simulations of 2D/3D acoustic and electromagnetic 
scattering problems, which needs a large number of values of the Green's functions. A significant topic is 
the scattering problems in periodic domains, where the corresponding Green's functions are quasi-periodic.
The quasi-periodic Green's functions are defined by series that converge too slowly to be used for calculations. 
Many mathematicians have developed several efficient numerical methods to calculate quasi-periodic Green's functions.
In this paper, we will propose a new FFT-based fast algorithm to compute the 2D/3D quasi-periodic Green's 
functions for both the Helmholtz equations and Maxwell's equations. The convergence results and error estimates 
are also investigated in this paper. Further, the numerical examples are given to show that, when a large
number of values are needed, the new algorithm is very competitive.

\vspace{.2in}
{\bf Keywords: FFT-based algorithm, periodic domain, Green's function, interpolation, convergence}
\end{abstract}

\section{Introduction}\label{sec1}
\setcounter{equation}{0}

In the scattering theory, the integral equation method is an efficient and widely used method for
numerical simulations. During the numerical procedure to solve the integral equations, a large number 
of values of the Green's functions and their derivatives are needed. In recent years, many mathematicians 
are interested in the topic that the acoustic/electromagnetic fields scattered in periodic domains with 
quasi-periodic incident waves, where the corresponding Green's functions are quasi-periodic. In this case, 
the quasi-periodic Green's functions, that are defined by slowly convergent series, are always
very difficult to evaluate. In this paper, we will develop a new FFT-based method to calculate 
the Green's functions in 2D periodic domains or 3D doubly periodic domains for the Helmholtz equations 
and Maxwell's equations numerically.

1) The quasi-periodic Green's function $G(x)$ for Helmholtz equations in 2D domains.

Assume that the 2D domain is $2\pi$-periodic in $x_1$-direction, and the incident wave $u^i$
is $\alpha$-quasi-periodic, i.e., $u^i(x_1+2\pi, x_2)=e^{i2\pi\alpha }u^i(x_1,x_2)$. The corresponding
Green's function is also $\alpha$-quasi-periodic and can be defined by the basic image expansion
\be\label{ii}
G(x)=\frac{i}{4}H_0^{(1)}(k|x|)+\frac{i}{4}\sum_{n\neq 0,\, n\in\Z}e^{i2\pi\alpha n}H_0^{(1)}(kr_n),
\en
where $r_n=\sqrt{(x_1-2\pi n)^2+x_2^2}$. It can also be defined by the basic eigenfunction expansion \cite{CML1}:
\be\label{ei}
G(x)=\frac{i}{4\pi}\sum_{n=-\infty}^\infty\frac{1}{\beta_n}\exp({i\alpha_n x_1+i\beta_n |x_2|}),
\en
where $\alpha_n=\alpha+n$,
\begin{equation*}
\beta_n=\begin{cases}
\sqrt{k^2-\alpha_n^2},
\quad &\text{if $|\alpha_n|\le k$,}\\
i\sqrt{\alpha_n^2-k^2},\quad &\text{otherwise.}
\end{cases}
\end{equation*}

2) The doubly quasi-periodic Green's function $G_d(x)$ for Helmholtz equations in 3D domains.

Assume that the 3D domain is $2\pi$-periodic in both $x_1$- and $x_2$-directions, and the incident wave $u^i$
is doubly quasi-periodic, i.e., for any integers $m_1,\,m_2$, $u^i(x_1+2m_1\pi, x_2+2m_2\pi,x_3)=e^{i2m_1\pi\alpha_1}e^{i2m_2\pi\alpha_2}u^i(x_1,x_2,x_3)$. Then the corresponding Green's 
function is also doubly quasi-periodic and can be defined by the basic eigenfunction expansion \cite{CML2}:
\be\label{he}
G_d(x)=\frac{i}{8\pi^2}\sum_{n_1,n_2\in\mathbb{Z}}\frac{1}{\beta_{n_1,n_2}}
\exp(i\alpha_{1,n_1}x_1+i\alpha_{2,n_2}x_2+i\beta_{n_1,n_2}|x_3|),
\en
where $\alpha_1,\,\alpha_2\in\mathbb{R}$, $\alpha_{1,n_1}=\alpha_1+n_1$, $\alpha_{2,n_2}=\alpha_2+n_2,$
\begin{equation*}
\beta_{n_1,n_2}=\begin{cases}
\sqrt{k^2-\alpha_{1,n_1}^2-\alpha_{2,n_2}^2},
\quad &\text{if $\alpha_{1,n_1}^2+\alpha_{2,n_2}^2\leqslant k^2$,}\\
i\sqrt{\alpha_{1,n_1}^2+\alpha_{2,n_2}^2-k^2},\quad &\text{otherwise.}
\end{cases}
\end{equation*}

3) The doubly quasi-periodic Green's tensor $\mathbb{G}(x)$ for Maxwell's equations in 3D domains.

The doubly quasi-periodic Green's tensor $\mathbb{G}(x)$ for Maxwell equations is a $3\times 3$
matrix defined by $G_d(x)$:
\be\label{me}
\mathbb{G}(x)=G_d(x)\mathbb{I}+\frac{1}{k^2}\nabla\nabla\cdot(G_d(x)\mathbb{I}),
\en
where $\mathbb{I}$ is the $3\times 3$ identity matrix. $\mathbb{G}(x)$ is
composed of $G_d(x)$ and its second order derivatives. So the numerical method of $\mathbb{G}$
comes directly from that of $G_d(x)$.

In this paper, we will consider numerical methods to calculate the functions $G(x)$, $G_d(x)$ and
$\mathbb{G}(x)$ in 2D/3D domains. We always assume that the cases we considered are away from
Wood anomalies, i.e., $\beta_n\neq 0$ for the 2D case and $\beta_{n_1,n_2}\neq 0$ for the 3D case.
The definition \eqref{ii} is the sum of Hankel functions that converges very slowly, which will cost 
a lot of time in calculation. From the definition of $G(x)$ in \eqref{ei},
the function $G(x)$ is defined by the series of exponential functions and converges rapidly 
when $x$ is away from the line $x_2=0$. When $x_2\rightarrow 0$, the series converges slowly, 
so this method fails. Similarly, the direct calculation from the definition \eqref{he} of $G_d(x)$ 
fails when $x_3\rightarrow 0$.

Many methods have been developed to evaluate the Green's function in 2D and 3D domains, such as
the Kummer's transformation, the Ewald method, the Lattice sums and the integral representations.
Linton compared several methods in 2D domains in \cite{CML1} and in 3D domains in \cite{CML2}. 
In these two papers, Linton gave the conclusions that, if only a few values are needed, 
the Ewald's method is the most efficient method, while if a large number of values are needed, 
the lattice sums method is better. There are many papers concerning the lattice sums method and 
the Ewald's method; see \cite{JM,YY,VGP,CWW,AM,VBB,ASSL,KM,OJW,SCBBM} for the 2D case 
and \cite{MP,TJ,KM,OJW,SCBBM,AM,VBB,AM,LT,SF,CML3} for the 3D case. There are also some other methods 
considering the numerical evaluation of the Green's functions; see, e.g., \cite{BKM,OC,BD,BF}. 
In particular, the cases that are near or at the Wood anomalies, which are very difficult to dealt with, 
have been considered in \cite{OC,BD,BF}. 
For very large $k$'s ($>10^5$), Kurkcu and Reitich introduced the NA method in 2D domains, which comes 
from the integral representations (see \cite{KR}). However, all of the known methods have their advantages 
and limitations. Take the lattice sums, the Ewald method and the NA method in 2D domain for example. 
The lattice sums converge very fast, especially when the wave number $k$ is small enough. 
This makes it the best method to calculate a large number of values in this case. 
But the convergence radius of the lattice sums depends on the wave number $k$. If $k$ becomes larger, 
the lattice sums need more terms to achieve the same accuracy, which involves more evaluations of Hankel 
functions and Bessel functions and thus needs more time.
What is worse, when $k$ is getting larger ($k\geqslant 50$), the lattice sums only converge in a very 
small disk, so the method no longer works for most points. However, the Ewald's method is always able 
to achieve the expected accuracy for both small and large $k$'s, but it costs much more time. 
So it might not be a good choice when a large number of values are needed. The NA method for 2D domains 
is of higher efficiency than the other methods when $k$ is very large ($>10^5$). However, when $k$ is 
not so large, e.g., $k\sim100$, its efficiency is even worse than the Ewald method, so it is not suitable 
for the calculation. The aim of this paper is to seek for a more efficient method that converges for any 
wave number $k$. In this paper, we develop a new FFT-based method that converges for any $k$ and is very 
efficient when a large number of values are needed.

Our method is based on FFT. First, we will transform the Green's function $G(x)$ ($G_d(x)$) into a periodic 
function in 2D (3D) domains. Then we can compute its Fourier series analytically by direct calculations. 
The idea is from \cite{VG1}, in which the Fourier series of the Hankel function are used to solve 
Lippmann-Schwinger equations in 2D domains. The method was extended to 2D periodic domains in \cite{SK} 
and to 3D domains in \cite{TH,LN}.
In one periodic cell, $x=0$ is the only singularity of the Green's function, where the Fourier series do 
not converge. For a better convergence, we will remove the singularity to get the smooth enough modified 
functions. The Fourier coefficients of the modified functions are computed numerically and applied to 
obtain the values on the grid points from IFFT.
Then the values of the modified functions on random points can be evaluated simply by interpolation.
It is now easy to obtain the values of the Green's functions. The derivatives of the Green's functions
can be calculated in the similar way.

This paper is organized as follows. In Section \ref{sec2}, we give a new FFT-based algorithm to
calculate the Green's function $G(x)$ in two dimensions. In Section \ref{sec6.3D}, the FFT-based algorithm
is applied to calculate the Green's function $G_d(x)$ for the Helmholtz equation in
three dimensions. In Section \ref{sec9.3D}, the FFT-based algorithm is extended to calculate the Green's tensor
$\mathbb{G}(x)$ for the Maxwell equations. The convergence analysis and error
estimates are given in Section \ref{sec3} for the numerical methods. 
In Section \ref{sec4}, we present some numerical examples for the
calculation of $G(x),\,G_d(x),\,\mathbb{G}(x)$.

\section{The FFT-based algorithm in two dimensions}\label{sec2}
\setcounter{equation}{0}

\subsection{Periodization and Fourier coefficients}\label{sec2.1}

Set $D_c=[-\pi,\pi]\times[-c,c]$, where $c$ is a positive real number which is chosen so that
for $|x_2|\ge c$ the series expansion (\ref{ei}) can be directly used to compute $G(x)$
efficiently. Thus, we will develop an efficient method to compute the Green's function $G(x)$
for $x\in D_c$.

From the series expansion (\ref{ei}) it follows that
\ben
e^{-i\alpha x_1}G(x)=\frac{i}{4\pi}\sum_{n=-\infty}^\infty\frac{1}{\beta_n}\exp({i n x_1+i\beta_n |x_2|}),
\enn
which is $2\pi$-periodic in the $x_1$-direction.

Let $\wid{c}>c$ and let $\mathcal{X}(t)$ be a smooth cut-off function for \high{$t\in \R$} such that
$\mathcal{X}(t)=1$ for \high{$|t|\le c$} and $\mathcal{X}(t)=0$ for \high{$|t|\ge(c+\wid{c})/2.$}
Define a new function $K(x)$:
\ben
K(x)=e^{-i\alpha x_1}G(x)\mathcal{X}(|x_2|)\quad\mbox{for}\;x\in\R\times[-\wid{c},\wid{c}].
\enn
Then $K(x)$ is zero for $x\in[-\pi,\pi]\times[(c+\wid{c})/2,\wid{c}]$ and
$x\in [-\pi,\pi]\times[-\wid{c},-(c+\wid{c})/2]$.
We extend $K$ into a $2\wid{c}$-periodic function in $x_2$-direction  which is denoted by $K$ again. 
Then $K$ is now a bi-periodic function in $\R^2$.
Note that $K$ is smooth in $D_{\tilde c}\setminus \{0\}$ and
\ben
G(x)=e^{i\alpha x_1}K(x)\quad\text{for}\;\;x\in D_c.
\enn

We now calculate the Fourier coefficients of $K$. To do this, we divide the rectangular
domain $D_{\wid{c}}$ uniformly into $2N\times2N$ small rectangles.
Set $S_N=\{j=(j_1,j_2)\in\Z^2\;:\;-N\le j_1, j_2<N\}$.
The grid points are denoted by $x_j=(x_{1j_1},x_{2j_2})$, where $x_{1j_1}=(\pi/N)j_1$,
$x_{2j_2}=(\wid{c}/N)j_2$ with $j=(j_1,j_2)\in S_N$.
Define the Fourier basis of $L^2(D_{\wid{c}})$ by
\ben
\phi_j(x)=\frac{1}{2\sqrt{\pi\wid{c}}}\exp(ij_1x_1+ij_2\pi x_2/\wid{c}),\quad j=(j_1,j_2)\in\Z^2.
\enn
Then $K(x)$ can be approximated by $K_N(x)$:
\ben\label{fck}
K_N(x)=\sum_{j\in S_N}\hat{K}_j\phi_j(x),
\enn
where $\hat{K}_j$ is the j-th Fourier coefficient of $K$.
By a direct calculation $\hat{K}_j$ is given by
\ben
\hat{K}_j&=&\int_{D_{\wid{c}}}K(x)\ov{\phi}_j(x)dx=\int_{D_{\wid{c}}}K(x)\phi_{-j}(x)dx\\
&=&\frac{i}{4\sqrt{\pi\wid{c}}}\frac{1}{\beta_{j_1}}\left(\int_0^{\wid{c}}
  e^{i(\beta_{j_1}-j_2\pi/\wid{c})x_2}\mathcal{X}(x_2)dx_2
  +\int_0^{\wid{c}}e^{i(\beta_{j_1}+j_2\pi/\wid{c})x_2}\mathcal{X}(x_2)dx_2\right).
\enn
If $\beta_{j_1}\neq\pm j_2\pi/\wid{c}$, then by integration by parts we have
\be\no
\hat{K}_j&=&\frac{1}{2\sqrt{\pi\wid{c}}}\left[\frac{1}{(\alpha+j_1)^2+(j_2\pi/\wid{c})^2-k^2}
 +\frac{1}{2\beta_{j_1}(j_2\pi/\wid{c}-\beta_{j_1})}\int_0^{\wid{c}}
  e^{i\beta_jx_2}\mathcal{X}'(x_2)e^{-i(j_2\pi/\wid{c})x_2}dx_2\right.\\ \label{fck-coef}
&&\left.\qquad-\frac{1}{2\beta_{j_1}(j_2\pi/\wid{c}+\beta_{j_1})}\int_0^{\wid{c}}
  e^{i\beta_jx_2}\mathcal{X}'(x_2)e^{i(j_2\pi/\wid{c})x_2}dx_2\right].
\en
The last two integrals can be calculated by the one-dimensional Fast Fourier Transform (FFT)
and matrix computation.

If $\beta_{j_1}=-j_2\pi/\wid{c}$ with $j_2\not=0$ then we have
\ben
\hat{K}_j=\frac{i}{4\sqrt{\pi\wid{c}}}\frac{1}{\beta_{j_1}}\int_0^{\wid{c}}\mathcal{X}(x_2)dx_2
+\frac{1}{4\sqrt{\pi\wid{c}}\beta_{j_1}(j_2\pi/\wid{c}-\beta_{j_1})}
\left[1+\int_0^{\wid{c}}\mathcal{X}'(x_2)e^{i(\beta_{j_1}-j_2\pi/\wid{c})x_2}dx_2\right].
\enn
The two integrals can be calculated by one-dimensional numerical quadratures.
$\hat{K}_j$ can be calculated similarly for the case $\beta_{j_1}=j_2\pi/\wid{c}$ with $j_2\not=0$.
Note that these two special cases seldom \high{occur}; in fact, they can even be avoided by
a perturbation of $\wid{c}$. Therefore, we assume that these two special cases do not occur.

It should be noted that a direct evaluation of $K$ by using the Fourier coefficients $\hat{K}_j$,
that is, by the approximation $K_N=\sum_{j\in S_N}\hat{K}_j\phi_j$,
will not lead to a convergent result due to the singularity at $x=0$ of $K(x)$.
In the next subsection we will introduce a convergent approximation to $K$ with Fourier coefficients
by removing the singularity of $K$.

\subsection{Removal of the singularity}\label{sec2.2}

We now derive a convergent approximation to $K$ with its Fourier coefficients
by removing the singularity of $K$.

From the image representation \eqref{ii}, $G$ is the sum of a singular function
$(i/4)H_0^{(1)}(k|x|)$ and an analytic function
$(i/4)\sum_{n\neq 0, n\in\Z}e^{i2\pi\alpha n}H_0^{(1)}(kr_n)$.
Then the singularity at zero of $G(x)$ is the same as that of $(i/4)H_0^{(1)}(k|x|)$,
so the singularity at zero of $K(x)$ is equivalent to that of $(i/4)H_0^{(1)}(k|x|)e^{-i\alpha x_1}$.

From the definition of the Hankel function:
\ben
H_0^{(1)}(k|x|)=J_0(k|x|)+iY_0(k|x|),
\enn
where the Bessel functions $J_0$ and $Y_0$ are defined by
\ben
J_0(k|x|)&=&\sum_{m=0}^{+\infty}\frac{(-1)^m}{(m!)^2}\Big(\frac{k|x|}{2}\Big)^{2m},\\
Y_0(k|x|)&=&\frac{2}{\pi}\ln\frac{k|x|}{2}J_0(k|x|)-\frac{1}{\pi}\sum_{m=0}^{\infty}
  \frac{(-1)^m}{(m!)^2}\Big(\frac{k|x|}{2}\Big)^{2m}\cdot 2\psi(m+1),
\enn
with the digamma function $\psi$, we have the asymptotic behavior of $H_0^{(1)}(k|x|)$ at small $|x|$:
\ben
H_0^{(1)}(k|x|)=\frac{2i}{\pi}\ln|x|-\frac{2i}{\pi}\frac{k^2|x|^2}{4}\ln|x|+O(|x|^4\ln|x|).
\enn
Thus, $K(x)$ has the asymptotic behavior at small $|x|$:
\be\no
K(x)&\sim&\frac{i}{4}H_0^{(1)}(k|x|)\cdot e^{-i\alpha x_1}\\ \no
&=&\Big[-\frac{1}{2\pi}\ln|x|+\frac{1}{2\pi}\frac{k^2|x|^2}{4}\ln|x|
   +O(|x|^4\ln|x|)\Big][1-i\alpha x_1+O(x_1^2)]\\ \label{sing}
&=&-\frac{1}{2\pi}\ln|x|+\frac{1}{2\pi}i\alpha x_1\ln|x|+O(|x|^2\ln|x|),\quad |x|\sim 0.
\en
Let $\mathcal{Y}_\vep(t)$ be a smooth function for $t\ge0$ such that $\mathcal{Y}_\vep(t)=1$
for $t\in[0,\vep]$ and $\mathcal{Y}_\vep(t)=0$ for $t\in[2\vep,\infty)$, where $0<\vep\ll1$.
Set
\ben
f_1=-\frac{1}{2\pi}\ln|x|\cdot\mathcal{Y}_\vep(|x|),\quad
f_2=-\frac{1}{2\pi}x_1\ln|x|\cdot\mathcal{Y}_\vep(|x|),\quad x\in D_{\wid{c}}.
\enn
Then $f_1,f_2$ are known functions independent of $k$ and $\alpha$.
\high{Extend the these two function into a $2\pi$-periodic function in $x_1$-direction and $2\wid{c}$-periodic 
function in $x_2$-direction. The functions are denoted by $f_1$ and $f_2$ again. Let $L=K-f_1+i\alpha f_2$.}
Then $L$ is a $C^{\mu}$ function with $0<\mu<2$.

The Fourier coefficients of $L$ are given by
\be\label{fcl-coef}
\hat{L}_j=\hat{K}_j-\hat{F}_{1,j}+i\alpha\hat{F}_{2,j},
\en
where $\hat{F}_{1,j}$ and $\hat{F}_{2,j}$ are the Fourier coefficients of $f_1$ and $f_2$,
respectively. By a direct calculation it can be obtained that
\be\label{fcf1-coef}
\hat{F}_{1,j}&=&\frac{1}{j_1^2+j_2^2\pi^2{\wid{c}}^{-2}}\Big(\frac{1}{2\sqrt{\pi\wid{c}}}
   +\frac{1}{2\pi}\hat{\Phi}_{1,j}\Big),\\ \label{fcf2-coef}
\hat{F}_{2,j}&=&\frac{1}{j_1^2+j_2^2\pi^2{\wid{c}}^{-2}}\Big(-2ij_1\hat{F}_{1,j}
   +\frac{1}{2\pi}\hat{\Phi}_{2,j}\Big),
\en
when $|j|\neq 0$, and
\ben
\hat{F}_{1,0}&=&-\frac{1}{2\sqrt{\pi\wid{c}}}\int_0^{2\vep}t\ln(t)\mathcal{Y}_\vep(t)dt,\\
\hat{F}_{2,0}&=&0.
\enn
Here, $\hat{\Phi}_{1,j}$ and $\hat{\Phi}_{2,j}$ are the Fourier coefficients of
$\Phi_1(x)=(2+\ln|x|)\mathcal{Y}_\vep'(|x|)/|x|+\mathcal{Y}_\vep''(|x|)\ln|x|$ and
$\Phi_2(x)=x_1\Phi_1(x)$, respectively.
Note that $\Phi_1$ and $\Phi_2$ are smooth functions so their Fourier coefficients
can be computed directly by 2D FFT.

Let
\ben\label{fcln}
L_N(x)=\sum_{j\in S_N}\hat{L}_j\phi_j(x),
\enn
then the values of $L_N$ at the grid points can be efficiently computed by the 2D inverse FFT (IFFT).

\begin{remark}\label{remark} {\rm 
Note that the functions $\Phi_1$ and $\Phi_2$ are independent of $k$ and $\alpha$, so $\hat{F}_{1,j}$ 
and $\hat{F}_{2,j}$ can be assumed known for different $k$'s and $\al$'s.
}
\end{remark}

\subsection{Evaluation of the Green's function at arbitrary points}\label{sec2.3}
\setcounter{equation}{0}

We now have the values of $L_N$ at the grid points. For an arbitrary point $x\in D_c$,
we use the 2D interpolation to compute the value of $K(x)$ or $G(x)$.
There are many 2D interpolation methods such as nearest-neighbor interpolation, bilinear interpolation, 
spline interpolation and bicubic interpolation.
In this paper, we will use the bicubic interpolation, which only needs to solve
a system of linear equations of $16$ unknowns. The interpolation error is $O(h^4)$,
where $h=O(N^{-1})$ is the largest grid size.

\begin{algorithm}\label{alg1}
Evaluation of the Green's function $G$. From Remark \ref{remark}, $\hat{F}_{1,j}$ and $\hat{F}_{2,j}$ 
are pre-computed, saved and loaded.
\begin{enumerate}
\item Preparation.
\begin{enumerate}
\item Calculate the Fourier coefficients $\hat{K}_j$ by \eqref{fck-coef}.
\item Calculate the Fourier coefficients $\hat{L}_j$ by \eqref{fcl-coef}
\item Calculate the values of $L_N$ at the grid points by 2D IFFT.
\end{enumerate}
\item Calculation.
\begin{enumerate}
\item Input a point $x$.
\item Check the size of $|x_2|;$\\
if $|x_2|>c$, use the series expansion \eqref{ei};\\
if $|x_2|\le c$, go to (c).
\item Find the unique real number $t\in [-\pi,\pi)$ and the unique integer $n$ such that $x_1=2n\pi+t$.
\item Calculate the value of $L_N$ at the point $(t,x_2)$ by bicubic interpolation with
    the nearest $16$ points. Then calculate the approximate value of $K(t,x_2)$ via
    $K_N(t,x_2)=L_N(t,x_2)+(f_1-i\alpha f_2)(t,x_2)$.
\item Calculate the approximate value of $G(x)$ with $G_N(x)=e^{i\alpha x_1}K_N(t,x_2)$.
\end{enumerate}
\end{enumerate}
\end{algorithm}

It is seen that in Algorithm \ref{alg1}, the preparation step is the most time-consuming one,
but it only needs to run once. This step involves the calculation of the Fourier coefficients
$\hat{K}_j$ and 2D IFFT. The evaluation of the Fourier coefficients $\hat{K}_j$ can be carried
out by 1D FFT and matrix calculation, which is of high efficiency in MATLAB.
The details will be explained in Section \ref{sec4}.

\subsection{Derivatives of the Green function}\label{sec2.4} 

In the integral equation method for solving scattering problems by periodic structures, we also need
to evaluate the first- and second-order derivatives of the quasi-periodic Green's function.
In this subsection, we briefly discuss extension of the above idea to their efficient and
accurate computation.

\subsubsection{First-order derivatives}\label{sec2.4.1}

By the definition of $K$ (see Subsection \ref{sec2.1}) it follows that
$G(x)=\exp({i\alpha x_1})K(x)$ for all $x\in D_c$. Then we have
\ben
\frac{\pa G(x)}{\pa x_1}=i\alpha e^{i\alpha x_1}K(x)+e^{i\alpha x_1}\frac{\pa K(x)}{\pa x_1},\quad
\frac{\pa G(x)}{\pa x_2}=e^{i\alpha x_1}\frac{\pa K(x)}{\pa x_2}.
\enn
Define
\ben
K_1(x)=i\alpha K(x)+\frac{\pa K(x)}{\pa x_1},\quad
K_1(x)=\frac{\pa K(x)}{\pa x_2}.
\enn
Then $K_1(x)=\exp({-i\alpha x_1}){\pa G(x)}/{\pa x_1}$,
$K_2(x)=\exp({-i\alpha x_1}){\pa G(x)}/{\pa x_2}$ for all $x\in D_c$.
The functions $K_1$ and $K_2$ are bi-periodic with the same periods as $K$.
Denote by $\hat{K}_{1,j}$ and $\hat{K}_{2,j}$ the Fourier coefficients of $K_1$ and $K_2$,
respectively. Then $\hat{K}_{1,j}=i(\alpha+j_1)\hat{K}_j$ and
$\hat{K}_{2,j}=ij_2({\pi}/{\wid{c}})\hat{K}_j$.
We now remove the singularity of $K_1$ and $K_2$.
The singularity at zero of $K_1$ is given by
\ben
K_1(x)&\sim& e^{-i\alpha x_1}\frac{i}{4}\frac{\pa H_0^{(1)}(k|x|)}{\pa x_1}
   =-\frac{ik}{4}e^{-i\alpha x_1}H_1^{{1}}(k|x|)\frac{x_1}{|x|}\\
&=&-\frac{ik}{4}\Big(1-i\alpha x_1-\frac{\alpha^2}{2}x_1^2+O(x_1^3)\Big)\Big(\frac{ik}{\pi}x_1\ln|x|
   -\frac{2i}{\pi}\frac{x_1}{k|x|^2}+O(|x|^2\ln|x|)\Big)\\
&=&-\frac{1}{2\pi}\Big(-\frac{k^2}{2}x_1\ln|x|+\frac{x_1}{|x|^2}-i\alpha\frac{x_1^2}{|x|^2}
  -\frac{\alpha^2}{2}\frac{x_1^3}{|x|^2}\Big)+O(|x|^2\ln|x|).
\enn
Define the singular function
\ben
h_1(x)=-\frac{1}{2\pi}\Big(-\frac{k^2}{2}x_1\ln|x|+\frac{x_1}{|x|^2}-i\alpha\frac{x_1^2}{|x|^2}
-\frac{\alpha^2}{2}\frac{x_1^3}{|x|^2}\Big)\mathcal{Y}_\vep(|x|),
\enn
where $\mathcal{Y}_\vep$ is defined in Subsection \ref{sec2.2}.
Then $K_1(x)=[K_1(x)-h_1(x)]+h_1(x)$ where the function $K_1(x)-h_1(x)$ has a better regularity.
The Fourier coefficients $\hat{H}_{1,j}$ of $h_1$ can be calculated directly,
similarly as for $\hat{F}_{1,j}$ and $\hat{F}_{2,j}$ in Subsection \ref{sec2.2}.
Then the function $K_1-h_1$ can be accurately evaluated efficiently at the grid points
via the 2D IFFT by using the known Fourier coefficients $\hat{K}_{1,j}-\hat{H}_{1,j}$.
Finally, the function $K_1$ or ${\pa G(x)}/{\pa x_1}$ can be accurately evaluated efficiently
at an arbitrary point $x\in D_c$ by bi-cubic interpolation.

Similarly, the function $K_2$ or ${\pa G(x)}/{\pa x_2}$ can also be accurately evaluated efficiently.
In this case, the function $K_2(x)=[K_2(x)-h_2(x)]+h_2(x)$ with a regular function $K_2(x)-h_2(x)$,
where the singular function
\ben
h_2(x)=-\frac{1}{2\pi}\Big(-\frac{k^2}{2}x_1\ln|x|+\frac{x_2}{|x|^2}-i\alpha\frac{x_1x_2}{|x|^2}
-\frac{\alpha^2}{2}\frac{x_1^2x_2}{|x|^2}\Big)\mathcal{Y}_\vep(|x|).
\enn

\subsubsection{Second-order derivatives}\label{sec2.4.2}

The second-order derivatives of $G$ usually occurs in the hyper-singular integral operator $T$
defined by
\ben
(T\phi)(x)=\frac{\pa}{\pa\nu(x)}\int_{\G}\frac{\pa G}{\pa\nu(y)}(x,y)\phi(y)ds(y)
\enn
for $\phi$ in certain space, where $\nu(x)$ is the unit normal vector at $x\in\G$.
In the integral equation methods for wave scattering from periodic structures,
we usually need to compute the difference between two $T$ operators with different
wave numbers. Thus, we will discuss the accurate and efficient evaluation of the differences
of second-order derivatives of $G_{k_1}$ and $G_{k_2}$ for different wave numbers $k_1$ and $k_2$
rather than the second-order derivatives of $G$ itself, where $G_{k_j}$ is defined similarly
as $G$ with $k$ replaced by $k_j$, $j=1,2.$

The second-order derivatives of $G$ in $D_c$ can be calculated as follows:
\ben
\frac{\pa^2 G(x)}{\pa x_1^2}&=&-\alpha^2 e^{i\alpha x_1}K(x)+2i\alpha e^{i\alpha x_1}
\frac{\pa K(x)}{\pa x_1}+e^{i\alpha x_1}\frac{\pa^2 K(x)}{\pa x_1^2};\\
\frac{\pa^2 G(x)}{\pa x_1\pa x_2}&=&i\alpha e^{i\alpha x_1}\frac{\pa K}{\pa x_2}
+e^{i\alpha x_1}\frac{\pa^2 K(x)}{\pa x_1\pa x_2};\\
\frac{\pa^2 G(x)}{\pa x_1\pa x_2}&=&e^{i\alpha x_1}\frac{\pa^2 K(x)}{\pa x_2^2}.
\enn
Define 
\ben
K_{11}(x)&=&-\alpha^2 K(x)+2i\alpha\frac{\pa K(x)}{\pa x_1}+\frac{\pa^2 K(x)}{\pa x_1^2};\\
K_{12}(x)&=&i\alpha\frac{\pa K}{\pa x_2}+\frac{\pa^2 K(x)}{\pa x_1\pa x_2};\\
K_{22}(x)&=&\frac{\pa^2 K(x)}{\pa x_2^2}.
\enn
Then $K_{11}(x)=\exp({-i\alpha x_1}){\pa^2 G(x)}/{\pa x_1^2}$,
$K_{12}(x)=\exp({-i\al x_1}){\pa^2 G(x)}/{\pa x_1\pa x_2}$ and
$K_{22}(x)=\exp({-i\al x_1}){\pa^2 G(x)}/{\pa x_2^2}$ in $D_c$, which are bi-periodic.
Their Fourier coefficients are given as $\hat{K}_{11,j}=-(\alpha+j_1)^2\hat{K}_j$,
$\hat{K}_{12,j}=-(\alpha+j_1) j_2({\pi}/{\wid{c}})\hat{K}_j$ and
$\hat{K}_{22,j}=-(j_2{\pi}/{\wid{c}})^2\hat{K}_j$.
When $|x|\to0$, we have
\ben
K_{11}(x)&\sim& e^{-i\alpha x_1}\frac{\pa^2 G(x)}{\pa x_1^2}
          \sim\frac{i}{4}e^{-i\alpha x_1}\frac{\pa^2 H_0^{(1)}(k|x|)}{\pa x_1^2}\\
&=&\frac{ik}{4}e^{-i\alpha x_1}\Big[H_1^{(1)}(k|x|)\frac{|x|^2-x_1^2}{|x|^3}
  +\frac{kx_1^2}{2|x|^2}\big[H_0^{(1)}(k|x|)-H_2^{(1)}(k|x|)\big]\Big]\\
&=&\frac{k^2}{2\pi}e^{-i\alpha x_1}\Big(\frac{1}{2}\ln|x|+\frac{x_1^2}{2|x|^2}\Big)
  +\frac{e^{-i\alpha x_1}}{\pi}\Big(\frac{1}{|x|^2}-\frac{2x_1^2}{|x|^4}\Big)+O(|x|^2\ln|x|).
\enn
Let $T_{11}(x)=K_{k_1,11}(x)-K_{k_2,11}(x)$, where $K_{k_j,11}$ is defined similarly as $K_{11}$
with $k$ replaced by $k_j$, $j=1,2.$ Then the asymptotic behavior of $T_{11}$ at $x=0$ is
\ben
T_{11}(x)&\sim&\frac{k_1^2-k_2^2}{2\pi}e^{-i\alpha x_1}\Big(\frac{1}{2}\ln|x|+\frac{x_1^2}{2|x|^2}
  +O(|x|^2\ln|x|)\Big)\\
&=&-\frac{k_1^2-k_2^2}{2\pi}\Big(\frac{1}{2}\ln|x|-\frac{i}{2}\alpha x_1\ln|x|+\frac{x_1^2}{2|x|^2}
  -\frac{i\alpha x_1^3}{2|x|^2}\Big)+O(|x|^2\ln|x|).
\enn
Define
\ben
h_{11}(x)=-\frac{k_1^2-k_2^2}{2\pi}\Big(\frac{1}{2}\ln|x|-\frac{i}{2}\alpha x_1\ln|x|
+\frac{x_1^2}{2|x|^2}-\frac{i\alpha x_1^3}{2|x|^2}\Big)\mathcal{Y}_\vep(|x|).
\enn
Then $T_{11}=(T_{11}-h_{11})+h_{11}$ with a regular function $T_{11}-h_{11}$.
The Fourier coefficients $\hat{H}_{11}$ of $h_{11}$ can be calculated directly as
in Subsection \ref{sec2.2} (cf. (\ref{fcf1-coef}) and (\ref{fcf2-coef})),
and the Fourier coefficients of $T_{11}-h_{11}$ are given as
$\hat{T}_{11}-\hat{H}_{11}=\hat{K}^{(k_1)}_{11,j}-\hat{K}^{(k_2)}_{11,j}-\hat{H}_{11}$,
where $\hat{K}^{(k_i)}_{11,j}$ is the same as $\hat{K}_{11,j}$ with $k$ replaced by $k_i$ ($i=1,2$).

The regular function $T_{11}-h_{11}$ can be accurately evaluated efficiently at the grid points
via the 2D IFFT, by using the known Fourier coefficients $\hat{T}_{11}-\hat{H}_{11}$.
Then at an arbitrary point $x\in D_c$ the function $T_{11}(x)-h_{11}(x)$ can be evaluated efficiently 
and accurately by bi-cubic interpolation. Thus, the function $T_{11}(x)$ can be computed efficiently
and accurately at an arbitrary point $x\in D_c$ by bi-cubic interpolation
since $T_{11}(x)=[T_{11}(x)-h_{11}(x)]+h_{11}(x)$.
Similarly as before, it can be shown that this computation method is of second-order convergence
with the error bound $C{|k_1^4-k_2^4|}/{N^2}$ since the leading term of the Fourier coefficients
$\hat{T}_{11}-\hat{H}_{11}$ is
\ben
\hat{T}_{11}-\hat{H}_{11}\sim\frac{k_1^4-k_2^4}
{[(\al+j_1)^2+(j_2\pi/{\wid{c}})^2-k_1^2][(\al+j_1)^2+(j_2{\pi}/{\wid{c}})^2-k_2^2]}
\enn
for large $j_1,j_2$.

The functions $T_{12}=K_{k_1,12}-K_{k_2,12}$ and $T_{22}=K_{k_1,22}-K_{k_2,22}$ can be evaluated
similarly since $T_{12}=(T_{12}-h_{12})+h_{12}$ and $T_{22}=(T_{22}-h_{22})+h_{22}$ with regular
functions $T_{12}-h_{12}$ and $T_{22}-h_{22}$ and singular functions
\ben
h_{12}(x)&=&-\frac{k_1^2-k_2^2}{2\pi}\Big(\frac{x_1x_2}{2|x|^2}
-\frac{i\alpha}{2}\frac{x_1^2x_2}{|x|^2}\Big)\mathcal{Y}_\vep(|x|);\\
h_{22}(x)&=&-\frac{k_1^2-k_2^2}{2\pi}\Big(\frac{1}{2}\ln|x|-\frac{1}{2}i\alpha x_1\ln|x|
+\frac{x_2^2}{2|x|^2}-\frac{i\alpha}{2}\frac{x_1x_2^2}{|x|^2}\Big)\mathcal{Y}_\vep(|x|).
\enn
Here, $K_{k_j,12}$ and $K_{k_j,22}$ are defined similarly as $K_{12}$ and $K_{22}$, respectively,
with $k$ replaced by $k_j$, $j=1,2.$

\section{The FFT-based algorithm in three dimensions}\label{sec6.3D}
\setcounter{equation}{0}

\subsection{Periodization and Fourier coefficients}\label{sec6.3D.1}

Set $D_{dc}=[-\pi,\pi]\times[-\pi,\pi]\times[-c,c]$, where $c$ is a chosen positive constant
such that for $|x_3|\ge c$ the series representation \eqref{he} can be directly used to evaluate
$G_d$ efficiently and accurately. Thus, we only consider the case when $x\in D_{dc}$.

By \eqref{he} we know that
\ben
e^{-i\alpha_1x_1-i\alpha_2x_2}G_d(x)=\frac{i}{8\pi^2}\sum_{n_1,n_2\in\Z}
\frac{1}{\beta_{n_1,n_2}}e^{in_1x_1+in_2x_2+i\beta_{n_1,n_2}|x_3|},
\enn
which is periodic in both $x_1$ and $x_2$ with period $2\pi.$
Now choose $\wid{c}>c$ and set $D_{d\wid{c}}=[-\pi,\pi]\times[-\pi,\pi]\times[-\wid{c},\wid{c}]$.
Define
\ben
K_d(x)=e^{-i\alpha_1x_1-i\alpha_2x_2}G_d(x)\mathcal{X}(|x_3|)\quad\mbox{for}\;\;x\in D_{d\wid{c}},
\enn
and extend it into a periodic function in $\R^3$, denoted by $K_d$ again.
Note that $K_d$ is smooth in $D_{d\wid{c}}\backslash\{0\}$ and
\be\label{pr}
G_d(x)=e^{i\alpha_1x_1+i\alpha_2x_2}K_d(x)\quad\mbox{for}\;\;x\in D_{dc}.
\en

We now calculate the Fourier coefficients of $K_d(x)$. For a large integer $N>0$ divide
$D_{d\wid{c}}$ into $2N\times 2N\times 2N$ uniform grids.
Set $S_N=\{l=(l_1,l_2,l_3)\in\Z^3:\,-N\le l_1,l_2,l_3< N\}$.
For $j=(j_1,j_2,j_3)\in S_N$ denote by $x_j=(x_{1,j_1},x_{2,j_2},x_{3,j_3})$ the grid points,
where $x_{1,j_1}=j_1{\pi}/{N}$, $x_{2,j_2}=j_2{\pi}/{N}$, $x_{3,j_3}=j_3{\wid{c}}/{N}$.
Let
\ben
\phi_j(x)=\frac{1}{\sqrt{8\pi^2\wid{c}}}\exp[i(j_1x_2+j_2x_2+j_3x_3/\wid{c})],
\quad j=(j_1,j_2,j_3)\in\Z^3.
\enn
Then $\{\phi_{j}\}_{j\in\Z^3}$ is the Fourier basis of $L^2(D_{d\wid{c}})$ and
the Fourier series of $K_d$ is given as $K_d(x)=\sum_{j\in\Z^3}\hat{K}_{dj}\phi_j(x)$,
where $\hat{K}_{dj}$ is the $j$-th Fourier coefficient of $K_d$ given by
\be\no
\hat{K}_{dj}&=&\int_{D_{d\wid{c}}}K_d(x)\phi_{-j}(x)dx
=\int_{D_{d\wid{c}}}e^{-i\alpha_1x_1-i\alpha_2x_2}G_d(x)\mathcal{X}(|x_3|)\phi_{-j}(x)dx\\ \no
&=&\frac{1}{\sqrt{8\pi^2\wid{c}}}\frac{i}{8\pi^2}\int_{D_{d\wid{c}}}
  \sum_{n_1,n_2\in\Z}\frac{1}{\beta_{n_1,n_2}}e^{in_1x_1+in_2x_2+i\beta_{n_1,n_2}|x_3|}
  \mathcal{X}(|x_3|)e^{-ij_1x_2-ij_2x_2-i{j_3}x_3/{\wid{c}}}dx\\ \label{kdj}
&=&\frac{i}{2\sqrt{8\pi^2\wid{c}}}\frac{1}{\beta_{j_1,j_2}}
\int_0^{\wid{c}}\left[e^{i(\beta_{j_1,j_2}-j_3/\wid{c})x_3}
    +e^{-i(\beta_{j_1,j_2}-j_3/\wid{c})x_3}\right]\mathcal{X}(x_3)dx_3.
\en

If $\beta_{j_1,j_2}\neq\pm j_3{\pi}/{\wid{c}}$ it then follows by integration by parts that
\be\no
\hat{K}_{dj}&=&\frac{1}{\sqrt{8\pi^2\wid{c}}}\Big[\frac{1}{(\alpha_1+j_1)^2
  +(\alpha_2+j_2)^2+(j_3\pi/\wid{c})^2-k^2}\\
&&\qquad\qquad+\frac{1}{2\beta_{j_1,j_2}(j_3{\pi}/{\wid{c}}-\beta_{j_1,j_2})}
  \int_{0}^{\wid{c}}e^{i(\beta_{j_1,j_2}-j_3\pi/\wid{c})x_3}\mathcal{X}'(x_3)dx_3\\ \label{kdj1}
&&\qquad\qquad-\frac{1}{2\beta_{j_1,j_2}(j_3{\pi}/{\wid{c}}+\beta_{j_1,j_2})}
  \int_{0}^{\wid{c}}e^{i(\beta_{j_1,j_2}+j_3\pi/\wid{c})x_3}\mathcal{X}'(x_3)dx_3\Big].
\en
The two integrals can be calculated efficiently by 1D FFT.

If $\beta_{j_1,j_2}=-j_3\pi/\wid{c}$ we have
\be\no
\hat{K}_{dj}&=&\frac{i}{2\sqrt{8\pi^2\wid{c}}\beta_{j_1,j_2}}\int_0^{\wid{c}}\mathcal{X}(x_3)dx_3\\
&&+\frac{1}{2\sqrt{8\pi^2\wid{c}}\beta_{j_1,j_2}(j_3{\pi}/{\wid{c}}-\beta_{j_1,j_2})}
\Big(1+\int_{0}^{\wid{c}}\mathcal{X}'(x_3)e^{i(\beta_{j_1,j_2}-ij_3{\pi}/{\wid{c}})x_3}dx_3\Big).
\label{kdj2}
\en
The two integrals can be calculated by 1D FFT or 1D numerical quadratures.
The case with $\beta_{j_1,j_2}=j_3{\pi}/{\wid{c}}$ can be dealt with similarly.
Note that these cases can be avoided by choosing $\wid{c}$ properly, so we assume that these cases 
do not occur in this paper.

Similar to the 2D case, the direct evaluation of $K_d$ by using the Fourier coefficients 
$K_{dN}(x)=\sum_{j\in S_N}\hat{K}_{dj}\phi_j(x)$ for $x\in D_{dc}$, will not lead to a convergent result
due to the singularity at $x=0$ of $K_d$. In the next subsection we will introduce a convergent 
approximation to $K_d$ by removing the singularity of $K_d$.

\subsection{Removal of the singularity}

Since $G_d(x)$ can be written as $G_d(x)=\exp(ik|x|)(4\pi|x|)^{-1}+a(x)$
in the neighborhood of $x=0$, where $a$ is an analytic function,
then the singularity of $K_d(x)$ at $x=0$ is the same as
$\exp[-i(\al_1x_1+\al_2x_2)]\exp(ik|x|)(4\pi|x|)^{-1}$.
Define
\ben
F(x)=e^{-i\alpha_1x_1-i\alpha_2x_2}\frac{e^{ik|x|}}{4\pi|x|}\mathcal{Y}_\vep(|x|),
\quad x\in D_{d\wid{c}}
\enn
for some positive number $\vep\ll\min\{1,\wid{c}\}/2$.
Then $F$ is zero outside of $D_{d\wid{c}}$ and can be extended into a periodic function
in $R^3$, denoted by $F$ again, with the same period as $K_d$. Thus, $L_d(x):=K_d(x)-F(x)$
is smooth in $D_{d\wid{c}}$ and periodic in $\R^3$.
The $j-$th Fourier coefficient of $L_d$ is given as $\hat{L}_{dj}=\hat{K}_{dj}-\hat{F}_j$,
where $\hat{K}_{dj}$ and $\hat{F}_j$ are the $j-$th Fourier coefficients of $K_d$ and $F$,
respectively. By a similar argument as in \cite{VG1} it can be derived that
\be\no
\hat{F}_j&=&\frac{1}{\sqrt{8\pi^2\wid{c}}}\frac{1}{(\al_1+j_1)^2+(\al_2+j_2)^2
       +(j_3\pi/\wid{c})^2-k^2}\\ \label{fj}
&&\cdot\Big[1-\int_{D_{d\wid{c}}}\frac{e^{ik|x|}}{4\pi|x|}\left[2ik\mathcal{Y}'_\vep(|x|)
+\mathcal{Y}''_\vep(|x|)\right]e^{-i(\al_{1,j_1}x_1+\al_{2,j_2}x_2+j_3 x_3\pi/\wid{c})}dx\Big].
\en
The integral can be calculated efficiently with 3D FFT.

Define the truncated Fourier series
\be\label{ln}
L_{dN}(x)=\sum_{j\in S_N}\hat{L}_{dj}\phi_j(x).
\en
Then $L_{dN}$ converges to $L$ with the convergence rate $O(N^{-m})$ for any $m\in\N^+$,
as $N\to\infty$ (see Theorem \ref{h1} in Section \ref{sec3}).
The values of $L_{dN}$ on the grid points can be evaluated efficiently by 3D IFFT.

\subsection{Evaluation of the Green function at arbitrary points}\label{sec6.3D.3}

The value of $L_{dN}(x)$ at an arbitrary point $x$ can be computed from the values of $L_{dN}$
at the grid points by using tri-cubic interpolation (e.g., the algorithm discussed in \cite{LM}).
The relative error of the tri-cubic interpolation is $h^4$, where $h=O(1/N)$ is the largest grid size.
Thus, for an arbitrary point $x\in D_{dc}$ the function $K_d(x)$ or the Green function $G_d(x)$
can be evaluated efficiently and accurately since $K_d(x)=L_d(x)+F(x)$.
This is summarized in the following algorithm.

\begin{algorithm}\label{alg2.3D}
The evaluation of the Green function $G_d(x)$.
\begin{enumerate}
\item Preparation
\begin{enumerate}
\item Calculate the coefficients $\hat{K}_{dj}$ by (\ref{kdj})-(\ref{kdj2}).
\item Calculate the coefficients $\hat{L}_{dj}$ via the formula
      $\hat{L}_{dj}=\hat{K}_{dj}-\hat{F}_j$ and (\ref{fj}).
\item Calculate the values of $L_{dN}$ on the grid points by 3D IFFT.
\end{enumerate}
\item Calculation
\begin{enumerate}
\item Input a point $x$.
\item Check the size of $|x_3|$;\\
if $|x_3|>c$, use \eqref{he} to calculate the value $G_d(x)$;\\
if $|x_3|\le c$, go to (c).
\item Find the unique real numbers $t_1,t_2\in[-\pi,\pi)$ and integers $n_1,n_2$
such that $x_1=2n_1\pi+t_1$, $x_2=2n_2\pi+t_2$.
\item Calculate the value $L_{dN}(t_1,t_2,x_3)$ by tri-cubic interpolation.
\item Calculate the approximate value of $K_d(t_1,t_2,x_3)$ via
      $K_{dN}(t_1,t_2,x_3)=L_{dN}(t_1,t_2,x_3)+F(t_1,t_2,x_3)$.
\item Calculate the approximate value of $G_d(t_1,t_2,x_3)$ by the formula
$G_{dN}(t_1,t_2,x_3)=e^{i\alpha_1t_1+i\alpha_2t_2}K_{dN}(t_1,t_2,x_3)$.
\end{enumerate}
\end{enumerate}
\end{algorithm}

In this algorithm, the preparation step is the most time-consuming one; however,
it needs to run only once.
The calculation step mainly needs to compute a multiplication of a $64\times 64$ matrix with
a $64$-dimensional vector and therefore consumes very little time.
The details will be discussed in Section \ref{sec4}.

\section{The FFT-based algorithm in the Maxwell case}\label{sec9.3D}
\setcounter{equation}{0}

The doubly quasi-periodic Green's tensor $\mathbb{G}(x)$ for Maxwell's equations in 3D can be rewritten in the form
\be\label{mg}
\mathbb{G}(x)=G_d(x)\mathbb{I}+\frac{1}{k^2}\left(
\begin{array}{ccc}
\ds\frac{\pa^2 G_d}{\pa x_1^2} & \ds\frac{\pa^2 G_d}{\pa x_1\pa x_2}
   &\ds\frac{\pa^2 G_d}{\pa x_1\pa x_3}\\
\ds\frac{\pa^2 G_d}{\pa x_2\pa x_1} & \ds\frac{\pa^2 G_d}{\pa x_2^2}
   & \ds\frac{\pa^2 G_d}{\pa x_2\pa x_3}\\
\ds\frac{\pa^2 G_d}{\pa x_3\pa x_1} & \ds\frac{\pa^2 G_d}{\pa x_3\pa x_2}
   & \ds\frac{\pa^2 G_d}{\pa x_3^2}\\
\end{array}\right).
\en
In addition to the values of the 3D Green's function $G_d(x)$, we only need to evaluate 
the second-order derivatives ${\pa^2 G_d}/({\pa x_p\pa x_q})$, $p,q=1,2,3$. Define
\ben
K^{pq}(x)=e^{-i\al_1x_1-i\al_2x_2}\frac{\pa^2}{\pa x_p\pa x_q}\left[G_d(x)\mathcal{X}(|x_3|)\right].
\enn
Then
\be\label{pre}
\frac{\pa^2 G_d(x)}{\pa x_p\pa x_q}=e^{i\al_1x_1+i\al_2x_2}K^{pq}(x),\quad x\in D_c.
\en
From the definition of $K_d$ it follows that
\ben
K^{pq}&=&e^{-i\al_1x_1-i\al_2x_2}\frac{\pa^2}{\pa x_p\pa x_q}
        \left[e^{i\al_1x_1+i\al_2x_2}K_d(x)\right]\\
&=&e^{-i\al_1x_1-i\al_2x_2}\frac{\pa^2}{\pa x_p\pa x_q}
   \left[e^{i\al_1x_1+i\al_2x_2}\sum_{j\in\Z^3}\hat{K}_{dj}\phi_j(x)\right]\\
&=&\frac{1}{\sqrt{8\pi^2\wid{c}}}e^{-i\alpha_1x_1-i\alpha_2x_2}
 \frac{\pa^2}{\pa x_p\pa x_q}\sum_{j\in\Z^3}\hat{K}_{dj}
e^{i\al_{1,j_1}x_1+i\al_{2,j_2}x_2+ij_3x_3\pi/\wid{c}}.
\enn
Set $c^1_j=\al_{1,j_1},\;c^2_j=\al_{2,j_2},\;c^3_j=j_3\pi/\wid{c}$.
Then, by a direct calculation the Fourier coefficients of $K^{pq}$ are given as 
$\hat{K}^{pq}_j=-c^p_jc^q_j\hat{K}_{dj}.$

Similar to the Helmholtz equation case, we also need to remove the singularity of $K^{pq}$.
For $x\in D_{d\wid{c}}$, let
\ben
F^{pq}(x)&=&e^{-i\al_1x_1-i\al_2x_2}\frac{\pa^2}{\pa x_p\pa x_q}
\left[e^{i\al_1x_1+i\al_2x_2}F(x)\right],\\
L_d^{pq}(x)&=&e^{-i\al_1x_1-i\al_2x_2}\frac{\pa^2}{\pa x_p\pa x_q}
\left[e^{i\al_1x_1+i\al_2x_2}L_d(x)\right],
\enn
where $F$ and $L_d$ are defined in Section \ref{sec6.3D} and satisfy that
$L_d(x)=K_d(x)-F(x)$ is smooth in $D_{d\wid{c}}$ and periodic in $\R^3$.
Then $L_d^{pq}(x)=K_d^{pq}(x)-F^{pq}(x)$ is also smooth in $D_{d\wid{c}}$ and periodic in $\R^3$,
and the Fourier coefficients of $F^{pq}$ and $L^{pq}$ are given, respectively, as
$\hat{F}^{pq}_j=-c^p_jc^q_j\hat{F}_j$ and $\hat{L}^{pq}_{dj}=-c^p_jc^q_j\hat{L}_{dj}$.

With these Fourier coefficients, the values of the truncated Fourier series
$L^{pq}_{dN}(x)=\sum_{j\in S_N}\hat{L}^{pq}_{dj}\phi_j(x)$ at the grid points
can be efficiently evaluated by 3D IFFT.
From these values of $L^{pq}_{dN}(x)$ at the grid points, and by tricubic interpolation
the value $L^{pq}_{dN}(x)$ at an arbitrary point $x\in D_{dc}$ can be efficiently computed.
Finally, for an arbitrary point $x\in D_{dc}$ the approximate value of $K^{pq}(x)$
(via the formula $K^{pq}_{dN}(x):=L^{pq}_{dN}(x)+F^{pq}(x)$) and therefore
${\pa^2 G_d(x)}/({\pa x_p\pa x_q})$ (via \eqref{pre}) can be efficiently obtained.

Based on the above idea and Algorithm \ref{alg2.3D}, the following algorithm is given
to evaluate the Green function $\mathbb{G}(x)$ efficiently and accurately.

\begin{algorithm}\label{alg3.3D}
Evaluation of the Green function $\mathbb{G}(x)$.
\begin{enumerate}
\item Preparation
\begin{enumerate}
\item Calculate the coefficients $\hat{K}_{dj}$ by (\ref{kdj})-(\ref{kdj2}).
\item Calculate the coefficients $\hat{L}_{dj}$ via the formula
      $\hat{L}_{dj}=\hat{K}_{dj}-\hat{F}_j$ and (\ref{fj}).
\item Calculate the coefficients $\hat{L}^{pq}_{dj}$ via the formula
      $\hat{L}^{pq}_{dj}=-c^p_jc^q_j\hat{L}_{dj}$.
\item Calculate the values of $L_{dN}$ and $L^{pq}_{dN}$ at the grid points by 3D IFFT.
\end{enumerate}
\item Calculation
\begin{enumerate}
\item Input a point $x$.
\item Check the size of $|x_3|$;\\
if $|x_3|>c$, use \eqref{he} and its derivatives to calculate the value $\mathbb{G}(x)$;\\
if $|x_3|\le c$, go to (c).
\item Find the unique real numbers $t_1,t_2\in[-\pi,\pi)$ and integers $n_1,n_2$
      such that $x_1=2n_1\pi+t_1$, $x_2=2n_2\pi+t_2$.
\item Calculate the value $L_{dN}(t_1,t_2,x_3)$ and $L^{pq}_N(t_1,t_2,x_3)$
      by tri-cubic interpolation.
\item Calculate the approximate value of $K_d(t_1,t_2,x_3)$ via
      $K_{dN}(t_1,t_2,x_3)=L_{dN}(t_1,t_2,x_3)+F(t_1,t_2,x_3)$.
\item Calculate the approximate value of $K^{pq}(t_1,t_2,x_3)$ via
      $K^{pq}_N(t_1,t_2,x_3)=L^{pq}_{dN}(t_1,t_2,x_3)+F^{pq}(t_1,t_2,x_3)$.
\item Calculate the approximate value of $G_d(t_1,t_2,x_3)$ by the formula
$G_{dN}(t_1,t_2,x_3)=e^{i\alpha_1t_1+i\alpha_2t_2}K_d(t_1,t_2,x_3)$.
\item Calculate the approximate value of $\pa^2G_d(t_1,t_2,x_3)/({\pa x_p\pa x_q})$
via the formula $\pa^2G_{dN}(t_1,t_2,x_3)/({\pa x_p\pa x_q})
 =e^{i\al_1t_1+i\al_2t_2}K_{dN}(t_1,t_2,x_3)$.
\item Calculate the approximate value of $\mathbb{G}(x)$ by \eqref{mg}.
\end{enumerate}
\end{enumerate}
\end{algorithm}

Similar to Algorithm \ref{alg2.3D}, in this algorithm, the most time-consuming step
is also the preparation step, which only needs to run once.
The calculation step consists mainly of seven interpolation processes and is very efficient.

\section{Convergence analysis}\label{sec3}
\setcounter{equation}{0}

In this section, we will study the convergence and error estimates of Algorithms \ref{alg1}, \ref{alg2.3D}
and \ref{alg3.3D}.

\subsection{The two-dimensional case}\label{con-2D}

\begin{theorem}\label{thm1}
For any wave number $k>0$ and any positive integer $N$, $L_N$ converges uniformly to $L$
with second-order accuracy as $N\to\infty$, that is,
\ben
\|L_N-L\|_{L^{\infty}(D_{\wid{c}})}=O\left(\frac{k^2+1}{N^2}\right).
\enn
\end{theorem}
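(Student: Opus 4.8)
The plan is to pass to the Fourier side and reduce the $L^\infty$ error to a tail sum of the Fourier coefficients of $L$. Since $|\phi_j|\equiv(2\sqrt{\pi\wid{c}})^{-1}$ on $D_{\wid{c}}$, one has
\[
\|L-L_N\|_{L^\infty(D_{\wid{c}})}\ \le\ \frac{1}{2\sqrt{\pi\wid{c}}}\sum_{j\notin S_N}|\hat L_j|,
\]
so the whole statement reduces to proving $\sum_{j\notin S_N}|\hat L_j|=O\bigl((k^2+1)/N^2\bigr)$. The crucial point is that, although $\hat K_j$, $\hat F_{1,j}$ and $\hat F_{2,j}$ each decay only like $|j|^{-2}$, the particular combination $\hat L_j=\hat K_j-\hat F_{1,j}+i\al\hat F_{2,j}$ decays like $|j|^{-4}$ with a constant of size $O(k^2+1)$; on the physical side this is exactly the content of Subsection~\ref{sec2.2}, namely that subtracting $f_1-i\al f_2$ removes the two leading singular terms $-\frac1{2\pi}\ln|x|$ and $\frac{i\al}{2\pi}x_1\ln|x|$ of $K$, so that $L$ behaves like $|x|^2\ln|x|$ near the origin.

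Concretely, I would expand each coefficient, for $|j|$ large, as an explicit rational leading term plus a remainder. Set $P(j)=(\al+j_1)^2+(j_2\pi/\wid{c})^2-k^2$ and $Q(j)=j_1^2+(j_2\pi/\wid{c})^2$. By \eqref{fck-coef}, \eqref{fcf1-coef} and \eqref{fcf2-coef}, the leading parts of $\hat K_j$, $\hat F_{1,j}$ and $i\al\hat F_{2,j}$ equal, up to the common factor $(2\sqrt{\pi\wid{c}})^{-1}$, respectively $1/P(j)$, $1/Q(j)$ and $2\al j_1/Q(j)^2$, while the remainders are the integrals against $\mathcal X'$ in \eqref{fck-coef} and the terms carrying $\hat\Phi_{1,j}$, $\hat\Phi_{2,j}$ in \eqref{fcf1-coef}--\eqref{fcf2-coef}. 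Reducing over the common denominator $P(j)Q(j)^2$ and using $P(j)-Q(j)=2\al j_1+\al^2-k^2$, the leading part of $\hat L_j$ becomes
\[
\frac{1}{2\sqrt{\pi\wid{c}}}\cdot\frac{(k^2-\al^2)Q(j)+2\al j_1\bigl(2\al j_1+\al^2-k^2\bigr)}{P(j)\,Q(j)^2}\,.
\]
The two potential $O(|j|^{-3})$ contributions have cancelled; since $\al$ is fixed and $j_1^2\le Q(j)$, the numerator is $O\bigl((k^2+1)\,Q(j)\bigr)$. For the remainders I would note that $\mathcal X'$, $\Phi_1$ and $\Phi_2$ are smooth and supported away from the singularity, so repeated integration by parts --- together with the decay of $e^{i\beta_{j_1}x_2}=e^{-\sqrt{(\al+j_1)^2-k^2}\,x_2}$ when $|\al+j_1|>k$ and the boundedness of $1/\beta_{j_1}$ away from Wood anomalies --- produces decay faster than any power of $|j|$; the $\hat\Phi$ terms are moreover independent of $k$ by Remark~\ref{remark}.

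To finish, I would fix a threshold $N_0$ with $N_0^2\ge C_0(k^2+1)$, chosen large enough that $|P(j)-Q(j)|\le 2|\al|\,|j_1|+\al^2+k^2\le\tfrac12 Q(j)$ whenever $|j|\ge N_0$; then $|P(j)|\ge\tfrac12 Q(j)\ge c_1|j|^2$ there, and hence $|\hat L_j|\le C(k^2+1)|j|^{-4}$ for all $|j|\ge N_0$, the remainder contribution being even smaller. For $N\ge N_0$ the elementary two dimensional tail bound $\sum_{|j|\ge N}|j|^{-4}=O(N^{-2})$ (comparison with $\int_N^\infty r^{-3}\,dr$) then gives $\sum_{j\notin S_N}|\hat L_j|\le C(k^2+1)/N^2$; for the remaining finitely many $N<N_0$ the target quantity $(k^2+1)/N^2$ is bounded below by a positive constant, so one only needs the uniform (in $N$) absolute convergence of the Fourier series of $L$, which holds because $L\in C^\mu$.

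The step I expect to be the main obstacle is the second paragraph: making the two stage cancellation precise --- subtracting $\hat F_{1,j}$ lifts the decay of $\hat K_j$ from $|j|^{-2}$ to $|j|^{-3}$, and adding $i\al\hat F_{2,j}$ lifts it further to $|j|^{-4}$ --- while keeping explicit track of the $k$ dependence of every constant, and checking that each remainder integral really does decay super-polynomially in $|j|$, which needs some care in the band $|\al+j_1|\le k$, where $\beta_{j_1}$ is real and the relevant integrand is merely oscillatory rather than exponentially damped.
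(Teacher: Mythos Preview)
Your proposal is correct and follows essentially the same approach as the paper: reduce the $L^\infty$ error to the tail sum $\sum_{j\notin S_N}|\hat L_j|$, compute the leading rational part of $\hat L_j$ by combining $\hat K_j$, $\hat F_{1,j}$, $i\al\hat F_{2,j}$ over the common denominator $P(j)Q(j)^2$, observe the cancellation that produces an $O((k^2+1)|j|^{-4})$ bound, and sum. Your leading term $\frac{(k^2-\al^2)Q(j)+2\al j_1(2\al j_1+\al^2-k^2)}{P(j)Q(j)^2}$ is exactly the paper's expression after expanding; the paper, however, is terser and simply records the leading asymptotics of $\hat K_j$, $\hat F_{1,j}$, $\hat F_{2,j}$ and of $\hat L_j$ and then sums, without your explicit treatment of the remainder integrals, the threshold $N_0$, or the band $|\al+j_1|\le k$ --- so your write-up is in fact more careful than the paper on precisely the points you flagged as the main obstacle.
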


\begin{proof}
From \eqref{fck-coef}, \eqref{fcf1-coef} and \eqref{fcf2-coef}, the leading order of
$\hat{K}_j,$ $\hat{F}_{1,j}$ and $\hat{F}_{2,j}$ for $|j|\rightarrow\infty$ is:
\ben
\hat{K}_j&\sim&\frac{1}{2\sqrt{\pi\wid{c}}}\cdot\frac{1}{(\alpha+j_1)^2
    +(j_2\pi/\wid{c})^2-k^2};\\
\hat{F}_{1,j}&\sim&\frac{1}{2\sqrt{\pi\wid{c}}}\cdot\frac{1}{j_1^2+(j_2\pi/\wid{c})^2};\\
\hat{F}_{2,j}&\sim&-\frac{1}{\sqrt{\pi\wid{c}}}
\cdot\frac{ij_1}{\big(j_1^2+(j_2\pi/\wid{c})^2\big)^2}.
\enn
Then the leading order of $\hat{L}_j$ is
\ben
\hat{L}_j=\hat{K}_j-\hat{F}_{1,j}+i\alpha \hat{F}_{2,j}\sim\frac{(k^2-\alpha^2)
\big(j_1^2+(j_2\pi/\wid{c})^2\big)+4\alpha^2j_1^2+2\alpha^3j_1-2\alpha j_1k^2}
{2\sqrt{\pi\wid{c}}\big((\alpha+j_1)^2+(j_2\pi/\wid{c})^2-k^2\big)\big(j_1^2
+(j_2\pi/\wid{c})^2\big)^2}.
\enn
Thus, for any $x\in D_{\wid{c}}$ we have
\ben
|L(x)-L_N(x)|&=&\Big|\sum_{j\notin S_N}\hat{L}_j\phi_j(x)\Big|
\le\frac{1}{2\sqrt{\pi\wid{c}}}\sum_{j\notin S_N}|\hat{L}_j|\\
&\sim&\frac{1}{4\pi\wid{c}}\sum_{j\notin S_N}\frac{|(k^2-\alpha^2)
\big(j_1^2+(j_2\pi/\wid{c})^2\big)+4\alpha^2j_1^2
+2\alpha^3j_1-2\alpha j_1k^2|}{\big((\alpha+j_1)^2
+(j_2\pi/\wid{c})^2-k^2\big)\big(j_1^2+(j_2\pi/\wid{c})^2\big)^2}\\
&=&O\Big(\frac{k^2+1}{N^2}\Big).
\enn
The proof is completed.
\end{proof}

\begin{theorem}\label{thm2}
For any $x\in D_{c}$ the error between the exact value $G(x)$ and the numerical value
$\wid{G}_N(x)$ obtained by Algorithm \ref{alg1} satisfies the following estimate
\ben
|G(x)-\wid{G}_N(x)|\le C\Big(\frac{k^2+2}{N^2}\Big),
\enn
where $C$ is a constant independent of $k$ and $N$.
\end{theorem}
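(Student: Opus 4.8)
The plan is to decompose the total error $G(x)-\wid{G}_N(x)$ into the three independent sources of error introduced by Algorithm \ref{alg1} and bound each separately. Since $G(x)=e^{i\alpha x_1}K(x)$ and $\wid{G}_N(x)=e^{i\alpha x_1}K_N(t,x_2)$ with $|e^{i\alpha x_1}|=1$, it suffices to estimate $|K(x)-K_N(x)|$ for $x\in D_c$. Now $K(x)=L(x)+(f_1-i\alpha f_2)(x)$ exactly, while the algorithm computes $K_N(x)=\widetilde{L}_N(x)+(f_1-i\alpha f_2)(x)$, where $\widetilde{L}_N$ is the \emph{interpolated} truncated Fourier series. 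The closed-form singular part $f_1-i\alpha f_2$ is evaluated exactly, so it cancels and we are left with $|K(x)-K_N(x)|=|L(x)-\widetilde{L}_N(x)|$. Writing $\widetilde{L}_N$ for the bicubic interpolant of the grid values of $L_N$, I split
\[
|L(x)-\widetilde{L}_N(x)|\le |L(x)-L_N(x)| + |L_N(x)-\widetilde{L}_N(x)|.
\]

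First I would handle the truncation term. This is exactly Theorem \ref{thm1}, which gives $\|L-L_N\|_{L^\infty(D_{\wid c})}=O((k^2+1)/N^2)$, and $D_c\subset D_{\wid c}$, so this term contributes $C(k^2+1)/N^2$.

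Next I would handle the interpolation term $|L_N(x)-\widetilde{L}_N(x)|$. The standard bicubic interpolation error on a grid of size $h=O(1/N)$ is $O(h^4\|D^4 L_N\|_\infty)$, but one must be careful that the fourth derivatives of $L_N$ do not blow up faster than a harmless power of $k$ and $N$. The key point is that $L$ itself is only $C^\mu$ with $\mu<2$ near the origin (because the singularity removal in Subsection \ref{sec2.2} only subtracts the $\ln|x|$ and $x_1\ln|x|$ terms, leaving an $O(|x|^2\ln|x|)$ remainder), so one cannot simply invoke a clean $O(h^4)$ bound for interpolating $L$. Instead I would argue as in the discussion following Algorithm \ref{alg1}: the interpolant $\widetilde{L}_N$ approximates the smooth-enough function $L$ to within the stated order, and combined with the truncation estimate the whole thing is dominated by the slower $O((k^2+1)/N^2)$ rate coming from the limited regularity of $L$. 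More precisely, since $L\in C^\mu$ with $\mu$ slightly below $2$, interpolation of $L$ already incurs an error $O(h^\mu)$; but the decay estimate on $\hat L_j$ from Theorem \ref{thm1}, $|\hat L_j|\lesssim (k^2+1)/|j|^4$ for large $|j|$, shows that the second differences of $L_N$ are uniformly bounded by $C(k^2+1)$ independent of $N$, so the interpolation error is in fact $O((k^2+1)h^2)=O((k^2+1)/N^2)$ as well. Combining the two bounds and absorbing constants gives $|G(x)-\wid{G}_N(x)|\le C(k^2+2)/N^2$ with $C$ independent of $k$ and $N$.

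The main obstacle, in my view, is the middle step: making the interpolation-error analysis rigorous in the presence of the residual $|x|^2\ln|x|$-type singularity of $L$ at the origin, and tracking the $k$-dependence through it. One has to verify that the constant in the bicubic interpolation estimate depends on $L$ only through second-order difference quotients (not genuine fourth derivatives), and that those difference quotients are controlled by $\sum_{j}|j|^2|\hat L_j|$, which by the leading-order formula for $\hat L_j$ in Theorem \ref{thm1} is $O(k^2+1)$ uniformly in $N$. Everything else — the cancellation of the exact singular part, the reduction from $G$ to $L$, and the invocation of Theorem \ref{thm1} — is routine.
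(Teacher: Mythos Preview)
Your decomposition misses one of the three error sources that Algorithm~\ref{alg1} actually introduces, and this omission is exactly what produces the ``$+2$'' rather than ``$+1$'' in the stated bound. The Fourier coefficients $\hat{L}_j=\hat{K}_j-\hat{F}_{1,j}+i\alpha\hat{F}_{2,j}$ are \emph{not} known exactly: the integrals in \eqref{fck-coef} are evaluated by 1D FFT, and $\hat{\Phi}_{1,j},\hat{\Phi}_{2,j}$ entering \eqref{fcf1-coef}--\eqref{fcf2-coef} are evaluated by 2D FFT. The paper's proof therefore distinguishes $L_N$ (built from the exact $\hat{L}_j$) from the actually computed $\wid{L}_N$ (built from approximate coefficients $\hat{\wid{L}}_j$), and inserts the intermediate estimate $|L_N(x^*)-\wid{L}_N(x^*)|\le C N^{-2}$ at the grid points, with a constant independent of $k$. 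Adding this to the truncation bound $C(k^2+1)/N^2$ from Theorem~\ref{thm1} yields $C(k^2+2)/N^2$. You announce ``three independent sources of error'' in your first sentence but then only treat truncation and interpolation; the coefficient-quadrature error is the missing third.

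On the interpolation step, your approach is actually more careful than the paper's. The paper simply asserts that bicubic interpolation has error $O(N^{-4})$ and writes $|L(x)-\wid{L}_N(x)|\le C(k^2+2)N^{-2}(1+O(N^{-4}))$, without addressing the fact that $L$ is only $C^\mu$ with $\mu<2$ near the origin. Your attempt to control the interpolation error via second differences of $L_N$ and the decay $|\hat{L}_j|\lesssim (k^2+1)|j|^{-4}$ is a more honest route, though note that $\sum_{j}|j|^2|\hat L_j|$ with $|\hat L_j|\sim |j|^{-4}$ is only a logarithmically divergent sum in 2D, so that particular bound would need refinement. In any case, the paper does not engage with this issue at all; for matching the paper you may simply quote the $O(N^{-4})$ interpolation error as stated in Subsection~\ref{sec2.3}.
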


\begin{proof}
By Theorem \ref{thm1} we have that at any grid point $x^*$
\ben
|L(x^*)-L_N(x^*)|\le C\left(\frac{k^2+1}{N^2}\right).
\enn
Note that the procedure of computing $L_N(x^*)$ makes use of FFT to compute the
Fourier coefficients of $L$ with the error bounded above by $CN^{-2}$.
So the value we actually obtained is $\wid{L}_N(x^*)$ with the Fourier coefficients
$\hat{\wid{L}}_j$ satisfying that ${|\hat{L}_j-\hat{\wid{L}}_j|}/{|\hat{\wid{L}}_j|}\le CN^{-2}.$
Thus we have
\ben
|L(x^*)-\wid{L}_N(x^*)|&\le&|L(x^*)-L_N(x^*)|+|L_N(x^*)-\wid{L}_N(x^*)| \\
&\le& C\left(\frac{k^2+1}{N^2}\right)+C\left(\frac{1}{N^{2}}\right)
=C\left(\frac{k^2+2}{N^2}\right).
\enn

For any $x\in D_{c}$ the value $\wid{L}_N(x)$ is obtained by bi-cubic interpolation
with the interpolation error $O(N^{-4})$. Then
\ben
|L(x)-\wid{L}_N(x)|\le C\frac{k^2+2}{N^2}(1+O(N^{-4}))=C\frac{k^2+2}{N^2}.
\enn
The required error estimate then follows from this since $\wid{G}_N(x)$ is obtained
directly from $\wid{L}_N(x)$. The proof is thus completed.
\end{proof}

By Theorem \ref{thm2} we see that Algorithm \ref{alg1} is of second-order convergence
and that $N$ must increase with the wave number $k$ increasing if the same level of
accuracy is required. Thus, the computational complexity will be increased as $k$ increases
in order to get the same level accuracy.

\begin{remark}\label{re1} {\rm
Algorithm \ref{alg1} is only of second-order accuracy due to the accuracy of FFT.
FFT with higher-order accuracy can be achieved by multiplying the integrand with a weight function.
Then, by removing more terms in \eqref{sing}, we can obtain an algorithm of higher-order convergence.
Note that the second-order accuracy is usually enough for integral equation methods.
}
\end{remark}

\subsection{The three-dimensional case}\label{con-3D}

We now consider the convergence of Algorithm \ref{alg2.3D} in the three-dimensional case.
First, we consider the convergence of the series \eqref{ln}. The following
result is achieved by using the representation of $\hat{L}_{dj}$.

\begin{theorem}\label{h1}
For any $m\in\mathbb{N}^+$, suppose $\mathcal{X},\,\mathcal{Y}_\vep\in C^{m+3}$. For any wave number
$k>0$ and positive integer $N$, $L_N$ converges uniformly to $L$ with $m$-th order accuracy as
$N\rightarrow0$, that is,
\be\label{est}
\|L_{dN}-L_d\|_{L^\infty(D_c)}\le CN^{-m},
\en
where $C$ is a constant depending on $\|\mathcal{X}^{(m+3)}\|_{L_1[-\tilde{c},\tilde{c}]}$,
$\|\mathcal{Y}_\vep^{(n)}\|_{L_1[-\tilde{c},\tilde{c}]}$, $1\le n\le m+3,$ and $k$.
\end{theorem}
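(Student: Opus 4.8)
The plan is to control the decay of the Fourier coefficients $\hat{L}_{dj}=\hat{K}_{dj}-\hat{F}_j$ and then sum the tail. First I would subtract \eqref{fj} from \eqref{kdj1} and observe that the two ``resonant'' terms $\frac{1}{\sqrt{8\pi^2\wid{c}}}\big[(\alpha_1+j_1)^2+(\alpha_2+j_2)^2+(j_3\pi/\wid{c})^2-k^2\big]^{-1}$ cancel exactly; this cancellation is precisely what $F$ was built to produce, since $F$ reproduces the principal singularity of $K_d$ at the origin. What remains in $\hat{L}_{dj}$ is, up to the overall constant $(8\pi^2\wid{c})^{-1/2}$, the sum of two kinds of contributions: term (i), the two oscillatory integrals
\[
\frac{1}{2\beta_{j_1,j_2}(j_3\pi/\wid{c}\mp\beta_{j_1,j_2})}\int_0^{\wid{c}}e^{i(\beta_{j_1,j_2}\mp j_3\pi/\wid{c})x_3}\,\mathcal{X}'(x_3)\,dx_3
\]
inherited from \eqref{kdj1}; and term (ii),
\[
\big[(\alpha_1+j_1)^2+(\alpha_2+j_2)^2+(j_3\pi/\wid{c})^2-k^2\big]^{-1}\int_{D_{d\wid{c}}}\Psi(x)\,e^{-i(\alpha_{1,j_1}x_1+\alpha_{2,j_2}x_2+j_3\pi x_3/\wid{c})}\,dx,
\]
where $\Psi(x)=\frac{e^{ik|x|}}{4\pi|x|}\big[2ik\,\mathcal{Y}_\vep'(|x|)+\mathcal{Y}_\vep''(|x|)\big]$, inherited from \eqref{fj}.

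Next I would bound the two pieces. For term (ii): since $\mathcal{Y}_\vep'$ and $\mathcal{Y}_\vep''$ vanish near $x=0$, the factor $|x|^{-1}$ in $\Psi$ is harmless, so $\Psi\in C^{m+1}$ with compact support in $D_{d\wid{c}}$ (this is where the hypothesis $\mathcal{Y}_\vep\in C^{m+3}$ enters); integrating by parts $m+1$ times in the coordinate direction in which $j$ has the largest component yields a factor $|j|^{-(m+1)}$ with an amplitude controlled by $\|\mathcal{Y}_\vep^{(n)}\|_{L_1}$, $1\le n\le m+3$, and by $k$ (through $e^{ik|x|}$ and the factor $2ik$), while the explicit denominator is $\gtrsim|j|^2$ for $|j|$ large and never vanishes, since we are away from Wood anomalies; this yields a bound $C(k)\,|j|^{-(m+3)}$ for term (ii). For term (i): $\mathcal{X}'$ is supported in $[c,(c+\wid{c})/2]\subset(0,\wid{c})$, hence bounded away from both endpoints, so $e^{i\beta_{j_1,j_2}x_3}\mathcal{X}'(x_3)$ is a $C^{m+2}$ periodic function of $x_3$ and integrating by parts $m+2$ times against $e^{\mp ij_3\pi x_3/\wid{c}}$ gives a factor $|j_3|^{-(m+2)}$ with an amplitude involving $\|\mathcal{X}^{(m+3)}\|_{L_1}$ and lower-order derivatives of $\mathcal{X}$, together with powers of $|\beta_{j_1,j_2}|$. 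For all but finitely many $(j_1,j_2)$ one has $\alpha_{1,j_1}^2+\alpha_{2,j_2}^2>k^2$, so $\beta_{j_1,j_2}$ is purely imaginary with $|\beta_{j_1,j_2}|\sim|(j_1,j_2)|$, and then $|e^{i\beta_{j_1,j_2}x_3}|=e^{-|\beta_{j_1,j_2}|x_3}\le e^{-c|\beta_{j_1,j_2}|}$ on the support of $\mathcal{X}'$; this exponential factor dominates every power of $|\beta_{j_1,j_2}|$ produced by the integrations by parts and also forces rapid decay in the $(j_1,j_2)$-directions, while the prefactor satisfies $|2\beta_{j_1,j_2}(j_3\pi/\wid{c}\mp\beta_{j_1,j_2})|^{-1}\lesssim|(j_1,j_2)|^{-2}$; for the finitely many remaining $(j_1,j_2)$, where $\beta_{j_1,j_2}$ is real with $0<|\beta_{j_1,j_2}|\le k$, the same integrations by parts give a uniform $C(k)\,(1+|j_3|)^{-(m+2)}$. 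Summing term (i) over $j\notin S_N$ therefore contributes at most $C(k)\,N^{-(m+1)}$.

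Then I would assemble the tail estimate. Because $L_d$ coincides near the origin with the analytic remainder $e^{-i\alpha_1x_1-i\alpha_2x_2}a(x)$ and is otherwise as smooth as $\mathcal{X}$ and $\mathcal{Y}_\vep$, it belongs to $C^{m+3}(D_{d\wid{c}})$ as a periodic function, so its Fourier series converges uniformly and $L_d-L_{dN}=\sum_{j\notin S_N}\hat{L}_{dj}\phi_j$. Using $|\phi_j(x)|=(8\pi^2\wid{c})^{-1/2}$, the estimates above, and $j\notin S_N\Rightarrow|j|\ge N$,
\[
\|L_{dN}-L_d\|_{L^\infty(D_{d\wid{c}})}\le\sum_{j\notin S_N}|\hat{L}_{dj}|\,\|\phi_j\|_{L^\infty}\le C(k)\sum_{|j|\ge N}|j|^{-(m+3)}\le C(k)\,N^{-m},
\]
since in three dimensions $\sum_{|j|\ge N}|j|^{-(m+3)}$ is comparable to $\int_N^\infty r^{-(m+3)}r^2\,dr=N^{-m}/m$. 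This also explains the hypothesis $\mathcal{X},\mathcal{Y}_\vep\in C^{m+3}$: two derivatives are consumed inside the singularity-removal identities (passing to $\mathcal{X}'$ and to $\mathcal{Y}_\vep''$), $m+1$ more are traded for polynomial decay by integration by parts, and the extra two powers of $|j|^{-1}$ needed to make the three-dimensional tail converge at the rate $N^{-m}$ rather than $N^{-m+1}$ are supplied for free by the explicit resonant denominator.

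The step I expect to be the main obstacle is the careful bookkeeping of the $\beta_{j_1,j_2}$-dependence in term (i): $\beta_{j_1,j_2}$ is real for the finitely many ``propagating'' indices $\alpha_{1,j_1}^2+\alpha_{2,j_2}^2\le k^2$ and purely imaginary otherwise, it appears simultaneously in a denominator and inside the exponential, and one must verify that for large $(j_1,j_2)$ the genuine exponential decay of $e^{i\beta_{j_1,j_2}x_3}$, which relies on $x_3\ge c>0$ throughout the support of $\mathcal{X}'$, outweighs all the positive powers of $|\beta_{j_1,j_2}|$ generated by repeated differentiation, while the propagating indices, for which no such decay is available, contribute only an $N$-independent amount once $N$ is large. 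One must also keep out of the way the degenerate cases $\beta_{j_1,j_2}=\pm j_3\pi/\wid{c}$ and the Wood-anomaly case $(\alpha_1+j_1)^2+(\alpha_2+j_2)^2+(j_3\pi/\wid{c})^2=k^2$, both already excluded in Section \ref{sec6.3D}; these are removed either by the perturbation of $\wid{c}$ mentioned there or by absorbing the finitely many exceptional terms into the constant $C(k)$.
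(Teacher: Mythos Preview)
Your proposal is correct and follows essentially the same strategy as the paper: decompose $\hat{L}_{dj}$ into the two oscillatory $\mathcal{X}'$-integrals and the $\mathcal{Y}_\vep$-integral, trade smoothness for decay via integration by parts, exploit the exponential damping $e^{-c|\beta_{j_1,j_2}|}$ for evanescent $(j_1,j_2)$, and sum the tail.

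The one noteworthy difference is in how you handle term (i). You treat $e^{i\beta_{j_1,j_2}x_3}\mathcal{X}'(x_3)$ as the smooth factor and integrate by parts against $e^{\mp ij_3\pi x_3/\wid{c}}$, producing $|j_3|^{-(m+2)}$ together with powers of $|\beta_{j_1,j_2}|$ from differentiating the product; you then kill those powers with the exponential $e^{-c|\beta_{j_1,j_2}|}$. The paper instead integrates by parts against the combined phase $e^{i(\beta_{j_1,j_2}\mp j_3\pi/\wid{c})x_3}$, differentiating only $\mathcal{X}$, which yields the cleaner factor $|\beta_{j_1,j_2}\mp j_3\pi/\wid{c}|^{-(m+2)}$ and a constant depending solely on $\|\mathcal{X}^{(m+3)}\|_{L_1}$, exactly as stated in the theorem. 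Your route is perfectly valid but makes the constant depend on all $\|\mathcal{X}^{(n)}\|_{L_1}$, $1\le n\le m+3$, so it proves a marginally weaker form of the dependence claimed. The paper then organizes the tail sum by splitting $\Z^3\setminus S_N$ into the region where $(j_1,j_2)$ is bounded and $j_3$ is large versus the region where $(j_1,j_2)$ is large, which packages the two mechanisms (polynomial decay in $j_3$, exponential decay in $(j_1,j_2)$) a bit more transparently than your propagating/evanescent split, but the substance is the same.
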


\begin{proof}
From $\hat{L}_{dj}=\hat{K}_{dj}-\hat{F}_j$, $\hat{L}_{dj}$ has three terms. Set
\ben
\hat{L}_{dj}&=&\frac{1}{\sqrt{8\pi^2\tilde{c}}}\left[\frac{1}{2\beta_{j_1,j_2}
  (j_3\pi/{\tilde{c}}-\beta_{j_1,j_2})}L_{1,j}
  -\frac{1}{2\beta_{j_1,j_2}(j_3\pi/{\tilde{c}}+\beta_{j_1,j_2})}L_{2,j}\right.\\
 &&\qquad +\left.\frac{1}{(\alpha_1+j_1)^2+(\alpha_2+j_2)^2+(j_3\pi/{\tilde{c}})^2-k^2}L_{3,j}\right],
\enn
where
\ben
L_{1,j}&=&\int_{-\tilde{c}}^{\tilde{c}}\exp[{i(\beta_{j_1,j_2}-j_3\pi/{\tilde{c}})x_3}]\mathcal{X}'(x_3)dx_3;\\
L_{2,j}&=&\int_{-\tilde{c}}^{\tilde{c}}\exp[{-i(\beta_{j_1,j_2}+j_3\pi/{\tilde{c}})x_3}]\mathcal{X}'(-x_3)dx_3;\\
L_{3,j}&=&\int_{D_{\tilde{c}}}\frac{e^{ik|x|}}{4\pi|x|}\big(2ik\mathcal{Y}_\vep'(|x|)+\mathcal{Y}_\vep''(|x|)\big)
 \exp[{-i(\alpha_{1,j_1}x_2+\alpha_{2,j_2}x_2+j_3\pi x_3/{\tilde{c}})}]dx.
\enn
First consider $L_{1,j}$. By integration by parts, we have
\ben
L_{1,j}=\frac{i^{m+2}}{(\beta_{j_1,j_2}-j_3\pi/\tilde{c})^{m+2}}
\int_{-\tilde{c}}^{\tilde{c}}\exp{[i(\beta_{j_1,j_2}-j_3\pi /\tilde{c})x_3]}\mathcal{X}^{(m+3)}(x_3)dx_3.
\enn
Then
\ben
|L_{1,j}|\leqslant\begin{cases}
\frac{1}{|\beta_{j_1,j_2}-j_3\pi/\tilde{c}|^{m+2}}
|| \mathcal{X}^{(m+3)} ||_{L_1[-\tilde{c},\tilde{c}]},\quad\text{ if $\rm{Im}(\beta_{j_1,j_2})$=0,}\\
\frac{e^{-|\beta_{j_1,j_2}|c}}{|\beta_{j_1,j_2}-j_3\pi/\tilde{c}|^{m+2}}
|| \mathcal{X}^{(m+3)} ||_{L_1[-\tilde{c},\tilde{c}]},\quad\text{ if $\rm{Re}(\beta_{j_1,j_2})$=0.}
\end{cases}
\enn
Similarly we can derive the estimate for $L_{2,j}$:
\ben
|L_{2,j}|\leqslant \begin{cases}\frac{1}{|\beta_{j_1,j_2}+j_3\pi/\tilde{c}|^{m+2}}
||\mathcal{X}^{(m+3)}||_{L_1[-\tilde{c},\tilde{c}]},\quad\text{ if $\I(\beta_{j_1,j_2})$=0,}\\
\frac{e^{-|\beta_{j_1,j_2}|c}}{|\beta_{j_1,j_2}+j_3\pi/\tilde{c}|^{m+2}}
||\mathcal{X}^{(m+3)}||_{L_1[-\tilde{c},\tilde{c}]},\quad\text{if $\Rt(\beta_{j_1,j_2})$=0.}
\end{cases}
\enn

Now consider $L_{3,j}$. Define 
$h(x)=\frac{e^{ik|x|}}{4\pi|x|}\big(2ik\mathcal{Y}_\vep'(|x|)
+\mathcal{Y}_\vep''(|x|)\big)e^{-i\alpha_1x_1-i\alpha_2x_2}$.
Then we have 
\ben
|L_{3,j}|\leqslant \frac{1}{|j|^{m+1}}|| h^{(m+1)}||_{L_1(D_{\tilde{c}})}.
\enn
Note that $h^{(m+1)}$ is composed of $\mathcal{Y}^{(n)}$, $1\leqslant n\leqslant m+3$.
Define the set
\begin{equation*}
T_N=\{n\in\mathbb{Z}:\,-N\leqslant n<N \}.
\end{equation*}
For a sufficient large integer $N$, when $j_1,j_2\notin T_{N-1}$, $\beta_{j_1,j_2}$ is a pure imaginary number.
Define
\begin{eqnarray*}
D_{1,N}&=&\{(n_1,n_2,n_3)\in \mathbb{Z}^3:\,n_1,n_2\in T_N,\,n_3\notin T_N\},\\ 
D_{2,N}&=&\{(n_1,n_2,n_3)\in \mathbb{Z}^3:\,\,n_1,n_2\notin T_N, n_3\in\mathbb{Z}\}.
\end{eqnarray*}
Then $\mathbb{Z}^3\setminus S_N=D_{1,N}\bigcup D_{2,N}$.
If $j\in D_{2,N}$, $\beta_{j_1,j_2}$ is a pure imaginary number.
Suppose $|\beta_{j_1.j_2}|$ has a lower bound, that is, $|\beta_{j_1.j_2}|\geqslant b>0$ $\forall j_1,j_2\in\Z$.

With the results above, we can estimate the difference between $L_N$ and L as follows:
\ben
\begin{aligned}
|L_d(x)-L_{dN}(x)|=&\left|\sum_{j\in \mathbb{Z}^3\setminus S_N}\hat{L}_{dj}\phi_j(x)\right|
=\left|\sum_{j\in D_{1,N}}\hat{L}_{dj}\phi_j(x)
  +\sum_{j\in D_{2,N}}\hat{L}_{dj}\phi_j(x)\right|\\
\leqslant &\frac{1}{8\pi^2\tilde{c}}\sum_{j\in D_{1,N}}|\hat{L}_{dj}|
  +\frac{1}{8\pi^2\tilde{c}}\sum_{j\in D_{2,N}}|\hat{L}_{dj}|\\
\leqslant &\frac{1}{8\pi^2\tilde{c}}\sum_{j\in D_{1,N}}
  \left[\frac{|L_{1,j}|}{2b|j_3\pi/\tilde{c}-\beta_{j_1,j_2}|}
+\frac{|L_{2,j}|}{2b|j_3\pi/\tilde{c}+\beta_{j_1,j_2}|}\right]\\
+&\frac{1}{8\pi^2\tilde{c}}\sum_{j\in D_{2,N}}
  \left[\frac{|L_{1,j}|}{2b|j_3\pi/\tilde{c}-\beta_{j_1,j_2}|}
+\frac{|L_{2,j}|}{2b|j_3\pi/\tilde{c}+\beta_{j_1,j_2}|}\right]\\
+&\frac{1}{8\pi^2\tilde{c}}\sum_{j\notin S_N}\frac{|L_{3,j}|}{|(\alpha_1+j_1)^2
  +(\alpha_2+j_2)^2+(j_3\pi/\tilde{c})^2-k^2|}.
\end{aligned}
\enn

Consider the first term. Note that $\forall j_1,j_2\in\mathbb{Z}$, $0\leqslant\Rt(\beta_{j_1,j_2})\leqslant k$.
When $N$ is sufficient large, $|\beta_{j_1,j_2}-j_3\pi/\tilde{c}|\sim |j_3|\pi/\tilde{c}$ for
any $j\in D_{1,N}$. We omit the constants and obtain that 
\ben
\begin{aligned}
\sum_{j\in D_{1,N}}\frac{|L_{1,j}|}{|j_3\pi/\tilde{c}-\beta_{j_1,j_2}|}
\leqslant&\sum_{j\in D_{1,N}}\frac{||\mathcal{X}^{(m+3)}||_{L_1[-\tilde{c},\tilde{c}]}}
{|\beta_{j_1,j_2}-j_3\pi/\tilde{c}|^{m+3}}\sim\sum_{j_1,j_2\in T_N}\sum_{j_3\notin T_N}
\frac{|| \mathcal{X}^{(m+3)}||_{L_1[-\tilde{c},\tilde{c}]}}{|j_3\pi/\tilde{c}|^{m+3}}\\
\leqslant& 4N^2\sum_{|j_3|\geqslant N}\frac{|| \mathcal{X}^{(m+3)}
||_{L_1[-\tilde{c},\tilde{c}]} }{|j_3\pi/\tilde{c}|^{m+3}}\leqslant CN^2\frac{1}{N^{m+2}}=CN^{-m}.
\end{aligned}
\enn
Similarly we can obtain the estimate for the second term:
\ben
\sum_{j\in D_{1,N}}\frac{|L_{2,j}|}{|j_3\pi/\tilde{c}+\beta_{j_1,j_2}|}\leqslant CN^{-m}.
\enn

Consider the third term. For $j\in D_{2,N}$, $\beta_{j_1,j_2}$
is a pure imaginary number and $\I(\beta_{j_1,j_2})=O(|j|)$, so
$|j_3\pi/\tilde{c}-\beta_{j_1,j_2}|\sim \sqrt{(j_3\pi/\tilde{c})^2+|j|^2}$. Then
\ben
\begin{aligned}
\sum_{j\in D_{2,N}}\frac{|L_{1,j}|}{|j_3\pi/\tilde{c}-\beta_{j_1,j_2}|}
\leqslant & \sum_{j\in D_{2,N}}\frac{e^{-|\beta_{j_1,j_2}|c}|| \mathcal{X}^{(m+3)}
||_{L_1[-\tilde{c},\tilde{c}]}}{|\beta_{j_1,j_2}-j_3\pi/\tilde{c}|^{m+3}}
\sim\sum_{j_1,j_2\notin T_N}\sum_{j_3\in\Z}\frac{e^{-|j|c}
||\mathcal{X}^{(m+3)}||_{L_1[-\tilde{c},\tilde{c}]}}{[(j_3\pi/\tilde{c})^2+|j|^2]^\frac{m+3}{2}}\\
\leqslant & C\sum_{|j_1|,|j_2|\geqslant N} \frac{e^{-|j|c}
||\mathcal{X}^{(m+3)}||_{L_1[-\tilde{c},\tilde{c}]}}{N^{m+2}}\leqslant Ce^{-cN}.
\end{aligned}
\enn
A similar estimate can be obtained for the fourth term:
\ben
\sum_{j\in D_{2,N}}\frac{|L_{2,j}|}{|j_3\frac{\pi}{\tilde{c}}+\beta_{j_1,j_2}|}\leqslant Ce^{-cN}.
\enn

The estimate for the last term is trivial, and we will not explain in detail here:
\ben
\sum_{j\in \mathbb{Z}^3\backslash S_N} \frac{|L_{3,j}|}{|(\alpha_1+j_1)^2+(\alpha_2+j_2)^2
+(j_3\frac{\pi}{\tilde{c}})^2-k^2|}\sim \sum_{j\in \mathbb{Z}^3\backslash S_N}
\frac{\parallel h^{(m+1)}\parallel_{L_1(D_{\tilde{c}})}}{|j|^{m+3}}\leqslant CN^{-m}.
\enn

The final estimate of $|L_N(x)-L(x)|$ is then obtained:
\ben
|L_d(x)-L_{dN}(x)|\leqslant (CN^{-m}+CN^{-m}+Ce^{-cN}+Ce^{-cN}+CN^{-m})\leqslant CN^{-m}.
\enn
The proof is completed.
\end{proof}

\begin{theorem}\label{h2}
For any $x\in D_{dc}$, the error between the exact value $G_d(x)$ and the numerical value $\widetilde{G}_{dN}(x)$ 
obtained by Algorithm \ref{alg2.3D} satisfies the following estimate
\ben
|G_d(x)-\widetilde{G}_{dN}(x)|=O\Big(\frac{1}{N^{\sigma}}\Big),
\enn
where $\sigma=\min\{2,m\}$.
\end{theorem}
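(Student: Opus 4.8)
The plan is to mimic the proof of Theorem \ref{thm2} from the two-dimensional case, adapting each step to the three-dimensional setting and tracking the source of each error contribution. The numerical value $\widetilde{G}_{dN}(x)$ is built from $\widetilde{L}_{dN}(x)$ via the relations $K_{dN}=L_{dN}+F$ and $G_{dN}=e^{i\alpha_1 t_1+i\alpha_2 t_2}K_{dN}$ (steps (e)--(f) of Algorithm \ref{alg2.3D}); since $F$ is a known smooth function evaluated exactly and multiplication by the unimodular exponential factor does not amplify errors, it suffices to bound $|L_d(x)-\widetilde{L}_{dN}(x)|$ for arbitrary $x\in D_{dc}$.

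First I would split the error at a grid point $x^\ast$ into three pieces:
\ben
|L_d(x^\ast)-\widetilde{L}_{dN}(x^\ast)|\le |L_d(x^\ast)-L_{dN}(x^\ast)|+|L_{dN}(x^\ast)-\widetilde{L}_{dN}(x^\ast)|.
\enn
The first term is the truncation error of the Fourier series, controlled by Theorem \ref{h1}, which gives $O(N^{-m})$ once $\mathcal{X},\mathcal{Y}_\vep\in C^{m+3}$. The second term is the error incurred because the Fourier coefficients $\hat{L}_{dj}$ are themselves not computed exactly: the integrals in \eqref{kdj1}--\eqref{kdj2} and \eqref{fj} are evaluated by 3D FFT and quadrature rather than in closed form. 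As in the proof of Theorem \ref{thm2}, the FFT-based quadrature has relative accuracy $O(N^{-2})$ on each coefficient, and summing over the finitely many $j\in S_N$ (with the coefficients themselves absolutely summable) yields a contribution of order $O(N^{-2})$. Combining the two gives $|L_d(x^\ast)-\widetilde{L}_{dN}(x^\ast)|=O(N^{-\sigma})$ with $\sigma=\min\{2,m\}$ at every grid point.

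Next I would pass from grid points to an arbitrary $x\in D_{dc}$ using the tri-cubic interpolation of Section \ref{sec6.3D.3}, whose error is $O(h^4)=O(N^{-4})$ relative to the grid values. Since $L_d$ is smooth (indeed $C^{m+3}$) on $D_{d\tilde c}$, the interpolation of $L_d$ itself has error $O(N^{-4})$, and the interpolant of the computed grid values differs from it by at most the pointwise grid error $O(N^{-\sigma})$ times a bounded Lebesgue-type constant for tri-cubic interpolation. Hence $|L_d(x)-\widetilde{L}_{dN}(x)|\le C N^{-\sigma}(1+O(N^{-4}))=O(N^{-\sigma})$, and the same bound transfers to $|G_d(x)-\widetilde{G}_{dN}(x)|$ after the harmless multiplication in step (f).

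The main obstacle I anticipate is the bookkeeping in the second term: one must argue carefully that the FFT/quadrature error in each $\hat{L}_{dj}$ is uniformly $O(N^{-2})$ in $j$ and that the sum $\sum_{j\in S_N}|\hat{L}_{dj}-\hat{\widetilde{L}}_{dj}|$ stays bounded — this relies on the decay $\hat{L}_{dj}=O(|j|^{-(m+2)})$ extracted in Theorem \ref{h1} together with the fact that the relative error is what is $O(N^{-2})$, not the absolute error, so the small coefficients contribute negligibly. Everything else (truncation via Theorem \ref{h1}, interpolation, the unimodular factor) is routine, so the statement follows with $\sigma=\min\{2,m\}$, the $2$ coming from the FFT accuracy and the interpolation, and the $m$ from the Fourier truncation.
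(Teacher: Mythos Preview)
Your proposal is correct and follows essentially the same route as the paper's own proof: split the grid-point error into the Fourier truncation piece (handled by Theorem \ref{h1}, giving $O(N^{-m})$) plus the FFT/quadrature error in the computed coefficients (relative error $O(N^{-2})$), combine to get $O(N^{-\sigma})$ with $\sigma=\min\{2,m\}$, then absorb the $O(N^{-4})$ tri-cubic interpolation error and transfer to $G_d$ via the exact addition of $F$ and the unimodular exponential. Your write-up is in fact slightly more careful than the paper's (you explicitly mention the Lebesgue constant of the interpolant and the summability argument for the coefficient errors), but the decomposition and the key inputs are identical.
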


\begin{proof}
By Theorem \ref{h1} it follows that at any grid point $x^*$,
\ben
|L_d(x^*)-L_{dN}(x^*)|\leqslant CN^{-m}.
\enn
Note that the procedure of computing $L_{dN}(x^*)$ makes use of FFT to compute the Fourier coefficients 
of $L_d$ with the error bounded by $CN^{-2}$. So the value we actually obtained is $\widetilde{L}_{dN}(x^*)$ 
with the Fourier coefficients $\hat{\widetilde{L}}_{dj}$ satisfying that
$|\hat{L}_{dj}-\hat{\widetilde{L}}_{dj}|/|\hat{\widetilde{L}}_{dj}|\leqslant CN^{-2}$. Thus we have
\ben
|L_d(x^*)-\widetilde{L}_{dN}(x^*)|\leqslant|L_d(x^*)-L_{dN}(x^*)|+ |L_{dN}(x^*)-\widetilde{L}_{dN}(x^*)|\leqslant CN^{-m}+CN^{-2}=CN^{-\sigma}.
\enn
For any $x\in D_{dc}$, the value $\widetilde{L}_{dN}(x)$ is obtained by tri-cubic interpolation with 
the interpolation error $CN^{-4}$. Then
\ben
|L_d(x)-\widetilde{L}_{dN}(x)|\leqslant CN^{-\sigma}(1+CN^{-4})=CN^{-\sigma}.
\enn
The required error estimate then follows from this since $\widetilde{G}_{dN}(x)$ is obtained directly from $\widetilde{L}_{dN}(x)$. The proof is completed.
\end{proof}

\begin{remark} {\rm 
From Theorem \ref{h2}, Algorithm \ref{alg2.3D} is of $\sigma$-th order accuracy, where $\sigma$ is the 
smaller one between $m$ and 2. This means that, if $m\geqslant2$, the algorithm is of second order. 
So if we need higher accuracy, we should choose sufficient smooth functions $\mathcal{X},
\mathcal {Y}_\vep$ and use an FFT method with higher-order accuracy.
}
\end{remark}

\subsection{The Maxwells equation case}\label{con-maxwell}

We now  give a convergence analysis and error estimates for Algorithm \ref{alg3.3D} to calculate $\mathbb{G}(x)$. 
The following convergence result of $L^{pq},\,p,q=1,2,3$ follows from Theorem \ref{h1}.

\begin{theorem}\label{m1}
For any integer $m>2$, suppose $\mathcal{X},\,\mathcal{Y}_\vep\in C^{m+3}$. For any wave number $k>0$ 
and positive integer $N$, $L^{pq}_N$ converges uniformly to $L^{pq}$ with $(m-2)$-th order accuracy 
as $N\rightarrow\infty$, that is,
\ben
\|L^{pq}_{dN}-L^{pq}_d\|_{\rm{L}^{\infty}(D_c)}\leqslant CN^{-m+2}.
\enn
\end{theorem}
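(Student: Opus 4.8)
The plan is to reduce Theorem \ref{m1} to Theorem \ref{h1} by exploiting the exact relation between the Fourier coefficients of $L^{pq}_d$ and those of $L_d$. Recall from Section \ref{sec9.3D} that $\hat{L}^{pq}_{dj} = -c^p_j c^q_j \hat{L}_{dj}$ with $c^1_j = \alpha_{1,j_1}$, $c^2_j = \alpha_{2,j_2}$, $c^3_j = j_3\pi/\tilde{c}$, each of which grows like $O(|j|)$ as $|j|\to\infty$. So the tail sum controlling $\|L^{pq}_{dN} - L^{pq}_d\|_{L^\infty}$ differs from the tail sum in the proof of Theorem \ref{h1} only by an extra factor $|c^p_j c^q_j| = O(|j|^2)$. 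First I would write
\ben
|L^{pq}_d(x) - L^{pq}_{dN}(x)| = \Big|\sum_{j\in\Z^3\setminus S_N} \hat{L}^{pq}_{dj}\phi_j(x)\Big|
\le \frac{1}{8\pi^2\tilde{c}}\sum_{j\in\Z^3\setminus S_N} |c^p_j|\,|c^q_j|\,|\hat{L}_{dj}|.
\enn

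Next I would re-run the estimates of Theorem \ref{h1} with this extra weight. Splitting $\Z^3\setminus S_N = D_{1,N}\cup D_{2,N}$ as before and using the bounds on $L_{1,j}, L_{2,j}, L_{3,j}$ established there (now with $\mathcal{X},\mathcal{Y}_\vep\in C^{m+3}$), each summand acquires the factor $O(|j|^2)$. For the $D_{1,N}$-contributions, where $|\hat{L}_{dj}|$ behaves like $\|\mathcal{X}^{(m+3)}\|_{L_1}/|j_3\pi/\tilde{c}|^{m+3}$ times the extra $1/|j_3\pi/\tilde{c}-\beta_{j_1,j_2}|$ factor, the two powers of $|j|$ reduce the effective decay exponent from $m+3$ to $m+1$, so the $4N^2\sum_{|j_3|\ge N}(\cdot)$ bound becomes $CN^2\cdot N^{-m}=CN^{-m+2}$. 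For the $D_{2,N}$-contributions the exponential factor $e^{-|j|c}$ dominates any polynomial growth, so those pieces remain $O(e^{-cN})$. For the $L_{3,j}$-term, the bound $|L_{3,j}|\le \|h^{(m+1)}\|_{L_1(D_{\tilde c})}/|j|^{m+1}$ divided by the $O(|j|^2)$ denominator and multiplied by the extra $O(|j|^2)$ again loses two powers, giving $\sum_{j\in\Z^3\setminus S_N} \|h^{(m+1)}\|_{L_1}/|j|^{m+1} \le CN^{-m+2}$. Summing the five pieces yields $|L^{pq}_d(x)-L^{pq}_{dN}(x)|\le CN^{-m+2}$ uniformly in $x\in D_c$, which is the claim.

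I expect the main technical obstacle to be bookkeeping the exact power of $|j|$ that the weight $c^p_j c^q_j$ contributes in each of the regimes: one must be careful that $c^p_j = \alpha_{p,j_p}$ grows only in the $j_p$-direction, not isotropically, so the crude replacement $|c^p_j c^q_j|\le C|j|^2$ is safe for an upper bound but one should check it does not destroy convergence in the $D_{1,N}$ sum, where the decay in $j_1,j_2$ was never used (those indices only ranged over the finite set $T_N$, contributing the factor $4N^2$). Since the extra $|j|^2$ there is at most $C(N^2 + |j_3|^2)$ and $|j_3|^{m+3}$ still dominates when $m>2$, the argument goes through; the hypothesis $m>2$ in the statement is exactly what guarantees the resulting exponent $m-2$ is positive and the sums converge. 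The remaining steps — uniform bound of $|\phi_j|$, interchanging sum and absolute value, and absorbing all constants into $C$ — are routine and identical to Theorem \ref{h1}.
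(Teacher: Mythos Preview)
Your proposal is correct and follows essentially the same approach as the paper, which simply states that the proof is similar to that of Theorem \ref{h1} with $\hat{L}_{dj}$ replaced by $\hat{L}^{pq}_{dj}=-c^p_jc^q_j\hat{L}_{dj}$. You have correctly identified that the extra factor $|c^p_jc^q_j|=O(|j|^2)$ costs exactly two orders of convergence in each of the three pieces of the tail estimate, and your observation that the hypothesis $m>2$ is precisely what is needed for the $L_{3,j}$-sum $\sum_{j\notin S_N}|j|^{-(m+1)}$ to converge in $\Z^3$ is the right justification for that assumption.
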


\begin{proof}
The proof is similar to that of Theorem \ref{h1}, by replacing $\hat{L}_{dj}$ with $\hat{L}^{pq}_{dj}$. 
So we omit it here.
\end{proof}

The following error estimate is also easily obtained from Theorem \ref{h2}, and we omit the proof here.

\begin{theorem}\label{m2}
For any $x\in D_{dc}$, the error between the exact value $\mathbb{G}(x)$ and the numerical value 
$\tilde{\mathbb{G}}_N(x)$ obtained by Algorithm \ref{alg3.3D} satisfies the estimate
\ben
\max_{j,l=1,2,3}|\mathbb{G}_{jl}(x)-\widetilde{\mathbb{G}}_{jl}(x)|=O\Big(\frac{1}{N^{\sigma}}\Big),
\enn
where $\sigma=\min\{2,m-2\}$, $A_{jl}$ stands for the entry of matrix $A$ at the $j$-th row and $l$-th column.
\end{theorem}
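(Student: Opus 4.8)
The plan is to reduce Theorem \ref{m2} to the already-proved Theorem \ref{h2} together with Theorem \ref{m1}, by following exactly the structure of the proof of Theorem \ref{h2} but applied term-by-term to the nine entries of $\mathbb{G}$. Recall from \eqref{mg} that each entry $\mathbb{G}_{jl}(x)$ is either $G_d(x)+k^{-2}\pa^2 G_d(x)/(\pa x_j\pa x_l)$ (on the diagonal) or $k^{-2}\pa^2 G_d(x)/(\pa x_j\pa x_l)$ (off the diagonal), and that Algorithm \ref{alg3.3D} computes $G_d$ through Algorithm \ref{alg2.3D} and each second-order derivative through $\pa^2 G_d/(\pa x_p\pa x_q)=e^{i\al_1t_1+i\al_2t_2}K^{pq}$ with $K^{pq}=L_d^{pq}+F^{pq}$, the quantity $L_d^{pq}$ being recovered from grid values by tri-cubic interpolation after a 3D IFFT. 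So the total error in $\mathbb{G}_{jl}$ is controlled by the error in $G_d$ (bounded by $O(N^{-\min\{2,m\}})$ via Theorem \ref{h2}) plus $k^{-2}$ times the error in the approximation of $\pa^2 G_d/(\pa x_j\pa x_l)$, and I only need to estimate the latter.

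For the second-derivative error, I would mirror the three-step argument in the proof of Theorem \ref{h2}. First, by Theorem \ref{m1}, the truncated Fourier series satisfies $\|L^{pq}_{dN}-L^{pq}_d\|_{L^\infty(D_c)}\le CN^{-(m-2)}$ at every grid point. Second, the Fourier coefficients $\hat{L}^{pq}_{dj}=-c^p_jc^q_j\hat{L}_{dj}$ are themselves computed via FFT with relative error $O(N^{-2})$, so the actually-computed grid values $\widetilde{L}^{pq}_{dN}(x^*)$ differ from $L^{pq}_{dN}(x^*)$ by $O(N^{-2})$; combining gives $|L^{pq}_d(x^*)-\widetilde{L}^{pq}_{dN}(x^*)|\le CN^{-\sigma}$ with $\sigma=\min\{2,m-2\}$ at grid points. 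Third, tri-cubic interpolation introduces a further relative error $O(N^{-4})$, which is absorbed, so $|L^{pq}_d(x)-\widetilde{L}^{pq}_{dN}(x)|\le CN^{-\sigma}$ for all $x\in D_{dc}$. Since $K^{pq}=L^{pq}_d+F^{pq}$ with $F^{pq}$ evaluated exactly (up to the analytic closed form, which carries no $N$-dependent error), and $\pa^2 G_d/(\pa x_p\pa x_q)=e^{i\al_1x_1+i\al_2x_2}K^{pq}$ with the prefactor of modulus one, the error in each computed second-order derivative is also $O(N^{-\sigma})$.

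Finally I would assemble the estimate for $\mathbb{G}_{jl}$ from \eqref{mg}: the diagonal entries incur the $G_d$-error $O(N^{-\min\{2,m\}})$ plus $k^{-2}$ times the derivative error $O(N^{-\sigma})$, and since $\sigma=\min\{2,m-2\}\le\min\{2,m\}$, the derivative term dominates and the total is $O(N^{-\sigma})$; the off-diagonal entries only carry the derivative error, again $O(N^{-\sigma})$. Taking the maximum over $j,l=1,2,3$ of finitely many such bounds yields $\max_{j,l}|\mathbb{G}_{jl}(x)-\widetilde{\mathbb{G}}_{jl}(x)|=O(N^{-\sigma})$, which is the claim; one notes the hidden constant depends on $k$ (both through the $k^{-2}$ factor and through the $k$-dependence already present in the constants of Theorems \ref{h1}--\ref{m1}), consistent with the constants in the earlier theorems.

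The only genuinely non-routine point is the reduction of the $L^{pq}$-estimate to Theorem \ref{h1}, i.e. the content of Theorem \ref{m1}: multiplying $\hat{L}_{dj}$ by $-c^p_jc^q_j$ degrades the decay in $|j|$ by two powers, so the same summation-by-regions estimate (the sets $D_{1,N}$, $D_{2,N}$ and the integration-by-parts bounds on $L_{1,j},L_{2,j},L_{3,j}$) still converges but now only gives $N^{-(m-2)}$ rather than $N^{-m}$, and the exponentially small contributions from $D_{2,N}$ are unaffected. Once Theorem \ref{m1} is granted as stated, the rest is a bookkeeping exercise identical in form to the proof of Theorem \ref{h2}, so I would simply say "the proof follows that of Theorem \ref{h2} with $L_{dN}$ replaced by $L^{pq}_{dN}$ and Theorem \ref{h1} replaced by Theorem \ref{m1}, followed by insertion into \eqref{mg}," and omit the repeated details — which is indeed how the excerpt states the result is to be proved.
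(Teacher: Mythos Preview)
Your proposal is correct and matches the paper's approach: the paper simply states that the estimate ``is also easily obtained from Theorem \ref{h2}'' and omits the proof, relying implicitly on Theorem \ref{m1} together with the assembly via \eqref{mg}, which is exactly the argument you spell out. Your additional remarks about the $G_d$-error being dominated by the derivative error since $\min\{2,m-2\}\le\min\{2,m\}$, and about the $k$-dependence of the constants, are accurate clarifications of points the paper leaves implicit.
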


\section{Numerical examples}\label{sec4}

In this section, we present several numerical examples of Algorithms \ref{alg1}, \ref{alg2.3D} and \ref{alg3.3D}. 
Our FFT-based method will be compared with some other efficient numerical methods to show that our method is 
competitive when a large number of values are needed. In the numerical methods in this section, the cut off 
functions $\mathcal{X}$ and $\mathcal{Y}_\vep$ are both $C^8$ functions.

\subsection{Numerical examples for $G(x)$}

We first consider the computation of $G(x)$ in 2D. The numerical results obtained by Algorithm \ref{alg1} are presented 
in Subsection \ref{Exm:2D-1}, and the comparison of the FFT-based method with the lattice sums, the Ewald's method 
and the NA method is given in Subsection \ref{Exm:2D-2}. In this section we fix $c=0.6$, $\wid{c}=1$. 
From Remark \ref{remark}, the Fourier coefficients $\hat{F}_{1,j}$ and $\hat{F}_{2,j}$ are assumed to be already known, 
so they could be calculated and saved before any numerical procedures. The calculation was carried out by 2D FFT 
with $2048\times2048$ uniform nodal grids in $D_{\widetilde{c}}$. All computations are carried out with
MATLAB R2012a, on a 2 GHz AMD 3800+ machine with 1 GB RAM.

\subsubsection{Examples of the FFT-based method}\label{Exm:2D-1}

We give four examples in this subsection. For each example, we fix $k$ and $\alpha$, and the Green's function 
is evaluated at four points $P^2_1=(0.01\pi,0)$, $P^2_2=(0.01\pi,0.01)$, $P^2_3=(0.5\pi,0)$ and $P^2_4=(0.5\pi,0.01)$. 
The relative errors at each point for different $N$'s are shown in Tables \ref{table1}-\ref{table4}.

\begin{table}[H]
\centering
\caption{$k=\sqrt{10}$, $\alpha=0.3$}\label{table1}
\begin{tabular}
{|p{1.8cm}<{\centering}|p{1.8cm}<{\centering}|p{1.8cm}<{\centering}
 |p{1.8cm}<{\centering}|p{1.8cm}<{\centering}|}
\hline
 & $P^2_1$ & $P^2_2$ & $P^2_3$ & $P^2_4$\\
\hline
$N=32$&$4.30\rm{E}-04$&$4.60\rm{E}-04$&$5.97\rm{E}-05$&$5.86\rm{E}-05$\\
\hline
$N=64$&$9.12\rm{E}-05$&$1.12\rm{E}-04$&$4.10\rm{E}-06$&$4.06\rm{E}-06$\\
\hline
$N=128$&$4.11\rm{E}-05$&$3.74\rm{E}-05$&$3.48\rm{E}-07$&$3.54\rm{E}-07$\\
\hline
$N=256$&$2.87\rm{E}-07$&$6.82\rm{E}-07$&$4.60\rm{E}-07$&$4.62\rm{E}-07$\\
\hline
$N=512$&$4.08\rm{E}-07$&$3.48\rm{E}-07$&$4.57\rm{E}-07$&$4.58\rm{E}-07$\\
\hline
$N=1024$&$1.70\rm{E}-07$&$1.66\rm{E}-07$&$4.57\rm{E}-07$&$4.58\rm{E}-07$\\
\hline
\end{tabular}
\end{table}

\begin{table}[H]
\centering
\caption{$k=5$, $\alpha=0.3$}\label{table2}
\begin{tabular}
{|p{1.8cm}<{\centering}|p{1.8cm}<{\centering}|p{1.8cm}<{\centering}
 |p{1.8cm}<{\centering}|p{1.8cm}<{\centering}|}
\hline
 & $P^2_1$ & $P^2_2$ & $P^2_3$ & $P^2_4$\\
\hline
$N=32$&$1.63\rm{E}-03$&$1.72\rm{E}-03$&$7.63\rm{E}-05$&$7.63\rm{E}-05$\\
\hline
$N=64$&$2.67\rm{E}-04$&$3.31\rm{E}-04$&$1.52\rm{E}-06$&$1.46\rm{E}-06$\\
\hline
$N=128$&$1.22\rm{E}-04$&$1.11\rm{E}-04$&$5.98\rm{E}-07$&$6.19\rm{E}-07$\\
\hline
$N=256$&$6.16\rm{E}-07$&$1.82\rm{E}-06$&$6.90\rm{E}-07$&$6.97\rm{E}-07$\\
\hline
$N=512$&$9.61\rm{E}-07$&$7.93\rm{E}-07$&$6.95\rm{E}-07$&$6.95\rm{E}-07$\\
\hline
$N=1024$&$2.59\rm{E}-07$&$2.55\rm{E}-07$&$6.95\rm{E}-07$&$6.95\rm{E}-07$\\
\hline
\end{tabular}
\end{table}

\begin{table}[H]
\centering
\caption{$k=50$, $\alpha=\sqrt{2}$}\label{table3}
\begin{tabular}
{|p{1.8cm}<{\centering}|p{1.8cm}<{\centering}|p{1.8cm}<{\centering}
 |p{1.8cm}<{\centering}|p{1.8cm}<{\centering}|}
\hline
 & $P^2_1$ & $P^2_2$ & $P^2_3$ & $P^2_4$\\
\hline
$N=128$&$3.96\rm{E}-02$&$3.66\rm{E}-02$&$2.28\rm{E}-04$&$1.84\rm{E}-04$\\
\hline
$N=256$&$1.81\rm{E}-03$&$2.07\rm{E}-03$&$1.53\rm{E}-05$&$9.71\rm{E}-06$\\
\hline
$N=512$&$2.45\rm{E}-04$&$2.30\rm{E}-04$&$6.84\rm{E}-06$&$6.64\rm{E}-06$\\
\hline
$N=1024$&$3.20\rm{E}-05$&$3.41\rm{E}-05$&$6.62\rm{E}-06$&$6.62\rm{E}-06$\\
\hline
\end{tabular}
\end{table}

\begin{table}[H]
\centering
\caption{$k=100$, $\alpha=-\sqrt{2}$}\label{table4}
\begin{tabular}
{|p{1.8cm}<{\centering}|p{1.8cm}<{\centering}|p{1.8cm}<{\centering}
 |p{1.8cm}<{\centering}|p{1.8cm}<{\centering}|}
\hline
 & $P^2_1$ & $P^2_2$ & $P^2_3$ & $P^2_4$\\
\hline
$N=256$&$3.89\rm{E}-02$&$3.75\rm{E}-02$&$1.44\rm{E}-04$&$6.40\rm{E}-05$\\
\hline
$N=512$&$2.76\rm{E}-03$&$2.82\rm{E}-03$&$4.75\rm{E}-06$&$8.76\rm{E}-06$\\
\hline
$N=1024$&$4.72\rm{E}-04$&$4.26\rm{E}-04$&$8.95\rm{E}-06$&$9.37\rm{E}-06$\\
\hline
\end{tabular}
\end{table}

From the examples above, it is known that Algorithm \ref{alg1} is convergent. When $N$ is relatively small, 
the relative errors decay at the rate shown in Subsection \ref{con-2D}. When $N$ gets larger, the relative 
errors do not decay as expected, as the 2D FFT algorithm to compute the Fourier coefficients $\hat{F}_{1,j}$ 
and $\hat{F}_{2,j}$ has an error level of $\frac{1}{2048^2}\sim 10^{-7}$.

The preparation time increases as $N$ increases, and is the most time-consuming part in the numerical procedure. 
The time cost in the preparation step is around $3.5$ seconds for $N=32,64,128,256$, $4.5$ seconds for $N=512$ 
and $7.5$ seconds for $N=1024$, on average. So the preparation time increase as $N$ increases, 
but it increases slowly and does not cost too much time. Moreover, the preparation step only needs to run once 
at the beginning of the whole numerical scheme.

On the other hand, the calculation time does not depend on $N$. In the examples above, it takes about 
$8.5\times 10^{-5}$ seconds on average. Thus the calculation step is very efficient, especially 
when a large number of values are required.

\subsubsection{Comparison with other methods}\label{Exm:2D-2}

We now compare the FFT-based method (FM in tables) with the lattice sums (LS in tables), 
the Ewald's method (EM in tables) and the NA$_1$ method from \cite{KR}.

First, four groups of examples are given to compare the FFT-based method with the lattice sums and 
the Ewald's method. In each group, we calculate the Green's function with a fixed $k$ and $\alpha$ 
at the four points $P^2_1$, $P^2_2$, $P^2_3$ and $P^2_4$. Similar to the FFT-based method,
the lattice sums also have two steps: the preparation step and the calculation step. 
As the preparation step takes no more than $1$ minute, we omit the preparation time. 
We adjust the parameters in each method to achieve a similar accuracy. If any of the methods fails 
to achieve the required accuracy, we will choose the parameters so that the method could achieve 
the best accuracy. The relative errors and calculation times are presented in Tables \ref{table5}-\ref{table8}.

\begin{table}[H]
\centering
\caption{$k=5$, $\alpha=0.3$}\label{table5}
\begin{tabular}
{|p{0.5cm}<{\centering}
 |p{1.2cm}<{\centering}|p{1.5cm}<{\centering}
 |p{1.2cm}<{\centering}|p{1.5cm}<{\centering}
 |p{1.2cm}<{\centering}|p{1.5cm}<{\centering}
 |p{1.2cm}<{\centering}|p{1.5cm}<{\centering}|}
\hline
\multirow{2}*{}&\multicolumn{2}{|c|}{$P^2_1$}&\multicolumn{2}{|c|}{$P^2_2$}&\multicolumn{2}{|c|}{$P^2_3$}
&\multicolumn{2}{|c|}{$P^2_4$}\\
\cline{2-9}
             &r-error& c-time(s)&r-error&c-time(s)&r-error&c-time(s)&r-error& c-time(s)\\
\hline
LS&$3\rm{E}-07$&$8.6\rm{E}-4$&$6\rm{E}-07$&$8.6\rm{E}-4$&$4\rm{E}-07$&$3.1\rm{E}-3$&$4\rm{E}-06$&$1.4\rm{E}-3$ \\
\hline
EM&$4\rm{E}-07$&$1.9\rm{E}-3$ &$4\rm{E}-07$&$2.0\rm{E}-3$&$3\rm{E}-07$&$1.9\rm{E}-3$ &$3\rm{E}-07$&$1.9\rm{E}-3$ \\
\hline
FM&$9\rm{E}-07$&$9.9\rm{E}-5$& $8\rm{E}-07$&$9.2\rm{E}-5$ &$7\rm{E}-07$&$9.2\rm{E}-5$& $7\rm{E}-07$&$8.8\rm{E}-5$ \\
\hline
\end{tabular}
\end{table}

\begin{table}[H]
\centering
\caption{$k=50$, $\alpha=\sqrt{2}$}\label{table6}
\begin{tabular}
{|p{0.5cm}<{\centering}
 |p{1.2cm}<{\centering}|p{1.5cm}<{\centering}
 |p{1.2cm}<{\centering}|p{1.5cm}<{\centering}
 |p{1.2cm}<{\centering}|p{1.5cm}<{\centering}
 |p{1.2cm}<{\centering}|p{1.5cm}<{\centering}|}
\hline
\multirow{2}*{}&\multicolumn{2}{|c|}{$P^2_1$}&\multicolumn{2}{|c|}{$P^2_2$}&\multicolumn{2}{|c|}{$P^2_3$}
&\multicolumn{2}{|c|}{$P^2_4$}\\
\cline{2-9}
             &r-error& c-time(s)&r-error&c-time(s)&r-error&c-time(s)&r-error& c-time(s)\\
\hline
LS&$5\rm{E}-05$&$9.0\rm{E}-4$&$2\rm{E}-05$&$1.1\rm{E}-3$&$2\rm{E}-01$&$1.4\rm{E}-3$&$2\rm{E}-01$&$1.4\rm{E}-3$ \\
\hline
EM&$2\rm{E}-05$&$3.0\rm{E}-3$ &$2\rm{E}-05$&$3.7\rm{E}-3$&$8\rm{E}-06$&$3.0\rm{E}-3$ &$8\rm{E}-06$&$3.0\rm{E}-3$ \\
\hline
FM&$3\rm{E}-05$&$8.6\rm{E}-5$& $3\rm{E}-05$&$9.0\rm{E}-5$ &$7\rm{E}-06$&$8.7\rm{E}-5$& $7\rm{E}-06$&$9.0\rm{E}-5$ \\
\hline
\end{tabular}
\end{table}

\begin{table}[H]
\centering
\caption{$k=100$, $\alpha=0.5$}\label{table7}
\begin{tabular}
{|p{0.5cm}<{\centering}
 |p{1.2cm}<{\centering}|p{1.5cm}<{\centering}
 |p{1.2cm}<{\centering}|p{1.5cm}<{\centering}
 |p{1.2cm}<{\centering}|p{1.5cm}<{\centering}
 |p{1.2cm}<{\centering}|p{1.5cm}<{\centering}|}
\hline
\multirow{2}*{}&\multicolumn{2}{|c|}{$P^2_1$}&\multicolumn{2}{|c|}{$P^2_2$}&\multicolumn{2}{|c|}{$P^2_3$}
&\multicolumn{2}{|c|}{$P^2_4$}\\
\cline{2-9}
             &r-error& c-time(s)&r-error&c-time(s)&r-error&c-time(s)&r-error& c-time(s)\\
\hline
LS&$1\rm{E}-04$&$1.2\rm{E}-3$&$1\rm{E}-04$&$1.2\rm{E}-3$&$9\rm{E}-01$&$1.3\rm{E}-3$&$9\rm{E}-01$&$1.2\rm{E}-3$ \\
\hline
EM&$2\rm{E}-04$&$3.4\rm{E}-3$ &$2\rm{E}-04$&$3.4\rm{E}-3$&$9\rm{E}-06$&$4.7\rm{E}-3$ &$9\rm{E}-06$&$3.6\rm{E}-3$ \\
\hline
FM&$4\rm{E}-04$&$9.2\rm{E}-5$& $4\rm{E}-04$&$8.8\rm{E}-5$ &$1\rm{E}-05$&$8.5\rm{E}-5$& $1\rm{E}-05$&$9.2\rm{E}-5$ \\
\hline
\end{tabular}
\end{table}

\begin{table}[H]
\centering
\caption{$k=200$, $\alpha=0.8$}\label{table8}
\begin{tabular}
{|p{0.5cm}<{\centering}
 |p{1.2cm}<{\centering}|p{1.5cm}<{\centering}
 |p{1.2cm}<{\centering}|p{1.5cm}<{\centering}
 |p{1.2cm}<{\centering}|p{1.5cm}<{\centering}
 |p{1.2cm}<{\centering}|p{1.5cm}<{\centering}|}
\hline
\multirow{2}*{}&\multicolumn{2}{|c|}{$P^2_1$}&\multicolumn{2}{|c|}{$P^2_2$}&\multicolumn{2}{|c|}{$P^2_3$}
&\multicolumn{2}{|c|}{$P^2_4$}\\
\cline{2-9}
             &r-error& c-time(s)&r-error&c-time(s)&r-error&c-time(s)&r-error& c-time(s)\\
\hline
LS&$1\rm{E}-02$&$1.4\rm{E}-3$&$1\rm{E}-02$&$1.3\rm{E}-3$&$6\rm{E}-01$&$1.2\rm{E}-3$&$6\rm{E}-01$&$1.0\rm{E}-3$ \\
\hline
EM&$2\rm{E}-03$&$4.4\rm{E}-3$ &$2\rm{E}-03$&$4.6\rm{E}-3$&$1\rm{E}-05$&$4.5\rm{E}-3$ &$1\rm{E}-05$&$5.3\rm{E}-3$ \\
\hline
FM&$4\rm{E}-03$&$8.9\rm{E}-5$& $4\rm{E}-03$&$8.5\rm{E}-5$ &$7\rm{E}-06$&$8.3\rm{E}-5$& $8\rm{E}-06$&$8.3\rm{E}-5$ \\
\hline
\end{tabular}
\end{table}

From the four examples it is seen that, when $k$ gets larger, the lattice sums need more time to reach 
the required accuracy or even fail to work when the point is not too close to $0.$ 
When $k$ and $r$ are both small enough, the lattice sums method is faster than the Ewald's method, 
but is much slower than the FFT-based method. On the other hand, the Ewald's method can reach any accuracy 
if required, but its speed is not so competitive. The FFT-based method is the fastest one during 
the calculation step, despite that it takes about $4$ seconds in the first example ($N=512$), 
and about $7.5$ seconds in the second to forth examples ($N=1024$) to do the preparation before 
the calculation step. This leads to the conclusion that, when $k$ is small enough, the FFT-based method 
is more competitive, while, when $k$ is larger, the FFT-based method wins if a large amount of values 
are needed, but if the number of evaluations is small or a very high accuracy is required, 
the Ewald's method is a better choice.

We also compare our method with the so-called NA method given in \cite{KR}, where the authors 
introduced three methods, NA$_1$, NA$_2$ and NA$_3$, to calculate the quasi-periodic Green's functions 
for very high wave numbers, i.e., $k\geq 10^4$. We only compare the FFT-based method
with NA$_1$ here since the three methods perform similarly when $k$ is not very high.
The examples are taken when $k=10.2,100.2,200.2$, at the points $(0,0.01)$ and $(0,0.1)$.
Similar to the examples before, we choose proper parameters such that both of the two methods 
can achieve similar relative errors.

\begin{table}[H]
\centering
\caption{Comparision with NA$_1$}\label{table9}
\begin{tabular}{|p{0.8cm}<{\centering}|p{1.8cm}<{\centering}|p{1.8cm}<{\centering}|p{1.8cm}<{\centering}
|p{1.4cm}<{\centering}|p{1.8cm}<{\centering}|p{1.4cm}<{\centering}|}
\hline
\multirow{2}*{k}&\multirow{2}*{point}&\multicolumn{2}{|c|}{$NA_1$}&\multicolumn{2}{|c|}{$FM$}\\
\cline{3-6}
      &       &r-error& c-time(s)&r-error&c-time(s)\\
\hline
10.2&$(0,0.01)$&1E-08&  0.045 &7E-09&0.000087\\
\hline
10.2&$(0,0.1)$&1E-07&  0.045 &3E-07&0.000089\\
\hline
100.2&$(0,0.01)$&1E-05&  0.046 &1E-05&0.000087\\
\hline
100.2&$(0,0.1)$&7E-06&  0.043 &4E-06&0.000084\\
\hline
200.2&$(0,0.01)$&6E-05&  0.046 &8E-05&0.000083\\
\hline
200.2&$(0,0.1)$&1E-05&  0.040 &6E-05&0.000086\\
\hline
\end{tabular}
\end{table}

From Table \ref{table9} it is seen that the FFT-based method is much faster than NA$_1$. But NA$_1$ also
has its advantage that it can reach a much higher rate of accuracy without costing more time.
It also works well for very large $k$'s for which the FFT-based method takes up too much time and memory 
to achieve the same accuracy. When the wave number $k$ is not that large, e.g., $k\sim 100$, 
the FFT-based method is more competitive if a large number of values are needed.

\subsection{Numerical examples for $G_d(x)$}\label{sec8.3D}
\setcounter{equation}{0}

We now present some numerical examples for the calculation of $G_d(x)$ in 3D by Algorithm \ref{alg2.3D}.
We present the examples to show the results from our methods in Subsection \ref{sec8.3D.1} 
and the comparison of our method with the lattice sums and the Ewald method in Subsection \ref{sec8.3D.2}. 
In this section we fix $c=0.6$, $\tilde{c}=1$.
Our numerical procedure in this section is implemented with Fortran 11.1.064, on a 2.40GHz NF560D2 
machine with 96 GB RAM.

\subsubsection{Examples of the FFT-based method}\label{sec8.3D.1}

We give the examples using the FFT-based method to compute the 3D quasi-periodic Green's function $G_d(x)$. 
We present six groups of examples. In each group, the wave number $k$ is fixed, and the values at four points
$P^3_1=(0,1.5,0.0008)$, $P^3_2=(0.03,0.03,0.0008)$, $P^3_3=(0,1.5,0.1)$ and $P^3_4=(0.03,0.03,0.1)$ are 
evaluated for different $N$'s. The eigenfunction expansion \eqref{he} with a sufficiently large number of terms 
is used to compute the "exact value" of $G_d(x)$.
The relative errors of the values calculated at each point with different $N$'s are shown 
in Tables \ref{table8.1}-\ref{table8.6}.

\begin{table}[H]
\centering
\caption{$k=1$, $\alpha_1=0.1$, $\alpha_2=0.2$}\label{table8.1}
\begin{tabular}
{|p{1.8cm}<{\centering}|p{1.8cm}<{\centering}|p{1.8cm}<{\centering}
 |p{1.8cm}<{\centering}|p{1.8cm}<{\centering}|}
\hline
 & $P^3_1$ & $P^3_2$ & $P^3_3$ & $P^3_4$\\
\hline
$N=32$&$3.73\rm{E}-04$&$6.97\rm{E}-05$&$3.94\rm{E}-04$&$1.72\rm{E}-04$\\
\hline
$N=64$&$1.18\rm{E}-07$&$1.24\rm{E}-07$&$1.20\rm{E}-07$&$1.88\rm{E}-07$\\
\hline
$N=128$&$3.59\rm{E}-07$&$7.29\rm{E}-09$&$3.54\rm{E}-07$&$1.88\rm{E}-08$\\
\hline
$N=256$&$5.48\rm{E}-08$&$2.52\rm{E}-10$&$5.39\rm{E}-08$&$6.53\rm{E}-10$\\
\hline
$N=512$&$1.38\rm{E}-09$&$9.67\rm{E}-12$&$1.32\rm{E}-09$&$2.49\rm{E}-10$\\
\hline
\end{tabular}
\end{table}

\begin{table}[H]
\centering
\caption{$k=5$, $\alpha_1=0.1$, $\alpha_2=0.2$}\label{table8.2}
\begin{tabular}
{|p{1.8cm}<{\centering}|p{1.8cm}<{\centering}
  |p{1.8cm}<{\centering}|p{1.8cm}<{\centering}|p{1.8cm}<{\centering}|}
\hline
 & $P^3_1$ & $P^3_2$ & $P^3_3$ & $P^3_4$\\
\hline
$N=32$&$7.76\rm{E}-04$&$1.20\rm{E}-04$&$6.81\rm{E}-04$&$2.89\rm{E}-04$\\
\hline
$N=64$&$6.44\rm{E}-05$&$4.58\rm{E}-06$&$6.43\rm{E}-05$&$1.17\rm{E}-05$\\
\hline
$N=128$&$1.01\rm{E}-05$&$9.31\rm{E}-07$&$9.83\rm{E}-06$&$2.42\rm{E}-06$\\
\hline
$N=256$&$1.55\rm{E}-06$&$3.23\rm{E}-08$&$1.51\rm{E}-06$&$8.39\rm{E}-08$\\
\hline
$N=512$&$4.58\rm{E}-08$&$1.17\rm{E}-08$&$4.50\rm{E}-08$&$3.05\rm{E}-08$\\
\hline
\end{tabular}
\end{table}

\begin{table}[H]
\centering
\caption{$k=10$, $\alpha_1=0.8$, $\alpha_2=\sqrt{2}$}\label{table8.3}
\begin{tabular}
{|p{1.8cm}<{\centering}|p{1.8cm}<{\centering}|p{1.8cm}<{\centering}
  |p{1.8cm}<{\centering}|p{1.8cm}<{\centering}|}
\hline
 & $P^3_1$ & $P^3_2$ & $P^3_3$ & $P^3_4$\\
\hline
$N=32$&$3.49\rm{E}-02$&$1.22\rm{E}-03$&$3.33\rm{E}-02$&$3.16\rm{E}-03$\\
\hline
$N=64$&$1.99\rm{E}-03$&$5.17\rm{E}-05$&$1.99\rm{E}-03$&$1.36\rm{E}-04$\\
\hline
$N=128$&$4.15\rm{E}-04$&$3.76\rm{E}-06$&$4.22\rm{E}-04$&$9.86\rm{E}-06$\\
\hline
$N=256$&$6.29\rm{E}-05$&$1.81\rm{E}-07$&$6.40\rm{E}-05$&$4.74\rm{E}-07$\\
\hline
$N=512$&$1.45\rm{E}-06$&$5.16\rm{E}-08$&$1.48\rm{E}-06$&$1.35\rm{E}-07$\\
\hline
\end{tabular}
\end{table}

\begin{table}[H]
\centering
\caption{$k=25$, $\alpha_1=0.8$, $\alpha_2=\sqrt{2}$}\label{table8.4}
\begin{tabular}
{|p{1.8cm}<{\centering}|p{1.8cm}<{\centering}
  |p{1.8cm}<{\centering}|p{1.8cm}<{\centering}|p{1.8cm}<{\centering}|}
\hline
 & $P^3_1$ & $P^3_2$ & $P^3_3$ & $P^3_4$\\
\hline
$N=64$&$3.97\rm{E}-02$&$1.48\rm{E}-03$&$3.94\rm{E}-02$&$3.63\rm{E}-03$\\
\hline
$N=128$&$3.35\rm{E}-03$&$6.04\rm{E}-05$&$3.78\rm{E}-03$&$1.49\rm{E}-04$\\
\hline
$N=256$&$4.99\rm{E}-04$&$6.19\rm{E}-06$&$5.65\rm{E}-04$&$1.53\rm{E}-05$\\
\hline
$N=512$&$7.86\rm{E}-06$&$6.53\rm{E}-07$&$9.55\rm{E}-06$&$1.60\rm{E}-06$\\
\hline
\end{tabular}
\end{table}

\begin{table}[H]
\centering
\caption{$k=50$, $\alpha_1=\sqrt{3}$, $\alpha_2=0.5$}\label{table8.5}
\begin{tabular}
{|p{1.8cm}<{\centering}|p{1.8cm}<{\centering}
  |p{1.8cm}<{\centering}|p{1.8cm}<{\centering}|p{1.8cm}<{\centering}|}
\hline
 & $P^3_1$ & $P^3_2$ & $P^3_3$ & $P^3_4$\\
\hline
$N=128$&$3.09\rm{E}-02$&$2.26\rm{E}-04$&$2.74\rm{E}-02$&$5.48\rm{E}-04$\\
\hline
$N=256$&$5.05\rm{E}-03$&$2.29\rm{E}-05$&$4.51\rm{E}-03$&$5.76\rm{E}-05$\\
\hline
$N=512$&$2.52\rm{E}-04$&$1.46\rm{E}-06$&$2.34\rm{E}-04$&$3.04\rm{E}-06$\\
\hline
$N=736$&$1.03\rm{E}-04$&$1.72\rm{E}-07$&$9.26\rm{E}-05$&$3.67\rm{E}-07$\\
\hline
\end{tabular}
\end{table}

\begin{table}[H]
\centering
\caption{$k=100$, $\alpha_1=\sqrt{3}$, $\alpha_2=0.5$}\label{table8.6}
\begin{tabular}
{|p{1.8cm}<{\centering}|p{1.8cm}<{\centering}|p{1.8cm}<{\centering}
 |p{1.8cm}<{\centering}|p{1.8cm}<{\centering}|}
\hline
 & $P^3_1$ & $P^3_2$ & $P^3_3$ & $P^3_4$\\
\hline
$N=128$&$7.80\rm{E}-02$&$5.48\rm{E}-03$&$1.05\rm{E}-01$&$1.55\rm{E}-02$\\
\hline
$N=256$&$9.68\rm{E}-03$&$1.13\rm{E}-03$&$1.56\rm{E}-02$&$3.14\rm{E}-03$\\
\hline
$N=512$&$1.15\rm{E}-03$&$4.01\rm{E}-05$&$5.74\rm{E}-04$&$1.02\rm{E}-04$\\
\hline
$N=736$&$1.31\rm{E}-04$&$3.64\rm{E}-06$&$1.94\rm{E}-04$&$9.27\rm{E}-06$\\
\hline
\end{tabular}
\end{table}

These examples above show that Algorithm \ref{alg2.3D} is convergent with the relative errors bounded 
by that estimated in Section \ref{con-3D}. Although the convergence of the algorithm is quite fast, 
as shown in Section \ref{con-3D}, when $k$ is large, $N$ should be larger to achieve an acceptable accuracy.

Similar to the 2D case, the preparation step is the most time-consuming part in the numerical scheme, 
and the time taken up by this step depends on $N$. Figure \ref{fig3D.1} gives the relationship between
the average preparation time and $N$.

\begin{figure}[H]
\centering\includegraphics[height=6cm,width=8cm]{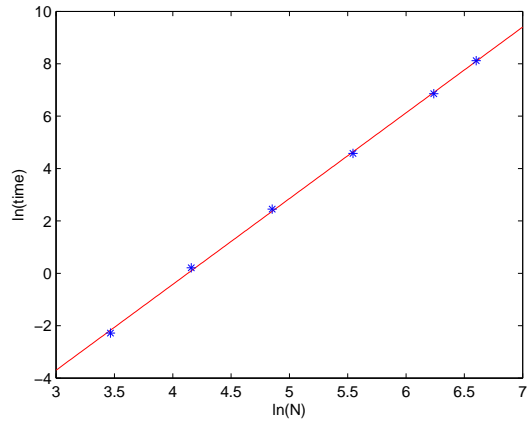}
\caption{preparation time}\label{fig3D.1}
\end{figure}

In Figure \ref{fig3D.1}, the blue stars are the data and the red line is the linear fitting of the data.
Figure \ref{fig3D.1} shows that the preparation time increases at the rate of $N^{3.3}$ as $N$ increases. 
It increases significantly when $N$ gets larger.

Fortunately, the preparation step needs to be run only once before the calculation step starts. 
The calculation step takes about $1.3\times 10^{-6}$ seconds on average, which does not depend on 
any parameter. This is a quite efficient step, which is very suitable for a large amount of evaluations.

\subsubsection{Comparison with other methods}\label{sec8.3D.2}

In this subsection, we compare the FFT-based method (FM) with the Ewald method (EM) and the lattice sums (LS).
We will apply these three methods to six examples with different $k$'s at the points $P^3_1$ and $P^3_2$. 
Different from the 2D case, the preparation time is no longer too small to be ignored in the 3D case.
In Tables \ref{table8.7}-\ref{table8.12}, we list the relative errors and the
preparation time (p-time) and calculation times (c-time) in the case when $k=1,5,10,25,50,100$.
The parameter $N$ in the FFT-based method is changed when $k$ is different.

\begin{table}[H]
\centering
\caption{$k=1$, $N=32$}\label{table8.7}
\begin{tabular}
{|p{0.8cm}<{\centering}|p{1.8cm}<{\centering}|p{1.6cm}<{\centering}
 |p{1.8cm}<{\centering}|p{1.6cm}<{\centering}|p{1.6cm}<{\centering}|}
\hline
\multirow{2}*{}&\multicolumn{2}{|c|}{$P^3_1$}&\multicolumn{2}{|c|}{$P^3_2$}&\multirow{2}*{p-time(s)}\\
\cline{2-5}
             &r-error& c-time(s)&r-error&c-time(s)&\\
\hline
EM&$3.60\rm{E}-05$&$1.00\rm{E}-3$&$1.02\rm{E}-06$&$1.00\rm{E}-3$&0 \\
\hline
LS&$4.29\rm{E}-02$&$6.90\rm{E}-5$ &$2.39\rm{E}-05$&$6.90\rm{E}-5$&13.4 \\
\hline
FM&$3.73\rm{E}-04$&$1.30\rm{E}-6$& $6.97\rm{E}-05$&$1.29\rm{E}-6$ &0.10 \\
\hline
\end{tabular}
\end{table}

\begin{table}[H]
\centering
\caption{$k=5$, $N=64$}\label{table8.8}
\begin{tabular}
{|p{0.8cm}<{\centering}|p{1.8cm}<{\centering}|p{1.6cm}<{\centering}
 |p{1.8cm}<{\centering}|p{1.6cm}<{\centering}|p{1.6cm}<{\centering}|}
\hline
\multirow{2}*{}&\multicolumn{2}{|c|}{$P^3_1$}&\multicolumn{2}{|c|}{$P^3_2$}&\multirow{2}*{p-time(s)}\\
\cline{2-5}
             &r-error& c-time(s)&r-error&c-time(s)&\\
\hline
EM&$4.09\rm{E}-05$&$1.40\rm{E}-2$&$3.38\rm{E}-06$&$1.40\rm{E}-2$&0 \\
\hline
LS&$2.81\rm{E}-03$&$6.90\rm{E}-5$&$2.99\rm{E}-06$&$6.80\rm{E}-5$&14 \\
\hline
FM&$6.44\rm{E}-05$&$1.28\rm{E}-6$& $4.58\rm{E}-06$&$1.29\rm{E}-6$ &1.24 \\
\hline
\end{tabular}
\end{table}

\begin{table}[H]
\centering
\caption{$k=10$, $N=128$}\label{table8.9}
\begin{tabular}
{|p{0.8cm}<{\centering}|p{1.8cm}<{\centering}|p{1.6cm}<{\centering}
 |p{1.8cm}<{\centering}|p{1.6cm}<{\centering}|p{1.6cm}<{\centering}|}
\hline
\multirow{2}*{}&\multicolumn{2}{|c|}{$P^3_1$}&\multicolumn{2}{|c|}{$P^3_2$}&\multirow{2}*{p-time(s)}\\
\cline{2-5}
             &r-error& c-time(s)&r-error&c-time(s)&\\
\hline
EM&$8.01\rm{E}-04$&$5.20\rm{E}-2$&$3.19\rm{E}-05$&$4.90\rm{E}-2$&0 \\
\hline
LS&$1.00\rm{E}-04$&$1.80\rm{E}-4$&$1.91\rm{E}-05$&$1.82\rm{E}-4$&34 \\
\hline
FM&$4.15\rm{E}-04$&$1.30\rm{E}-6$& $3.76\rm{E}-06$&$1.30\rm{E}-6$ &11.42 \\
\hline
\end{tabular}
\end{table}

\begin{table}[H]
\centering
\caption{$k=25$, $N=256$}\label{table8.10}
\begin{tabular}
{|p{0.8cm}<{\centering}|p{1.8cm}<{\centering}|p{1.6cm}<{\centering}
 |p{1.8cm}<{\centering}|p{1.6cm}<{\centering}|p{1.6cm}<{\centering}|}
\hline
\multirow{2}*{}&\multicolumn{2}{|c|}{$P^3_1$}&\multicolumn{2}{|c|}{$P^3_2$}&\multirow{2}*{p-time(s)}\\
\cline{2-5}
             &r-error& c-time(s)&r-error&c-time(s)&\\
\hline
EM&$2.26\rm{E}-04$&$0.33$&$1.31\rm{E}-05$&$0.32$&0 \\
\hline
LS&$2.89\rm{E}-04$&$6.65\rm{E}-4$&$2.34\rm{E}-04$&$6.68\rm{E}-4$&125 \\
\hline
FM&$4.99\rm{E}-04$&$1.28\rm{E}-6$& $6.19\rm{E}-06$&$1.30\rm{E}-6$ &96.2 \\
\hline
\end{tabular}
\end{table}

\begin{table}[H]
\centering
\caption{$k=50$, $N=512$}\label{table8.11}
\begin{tabular}
{|p{0.8cm}<{\centering}|p{1.8cm}<{\centering}|p{1.6cm}<{\centering}
 |p{1.8cm}<{\centering}|p{1.6cm}<{\centering}|p{1.6cm}<{\centering}|}
\hline
\multirow{2}*{}&\multicolumn{2}{|c|}{$P^3_1$}&\multicolumn{2}{|c|}{$P^3_2$}&\multirow{2}*{p-time(s)}\\
\cline{2-5}
             &r-error& c-time(s)&r-error&c-time(s)&\\
\hline
EM&$1.75\rm{E}-04$&$1.22$&$5.96\rm{E}-05$&$1.21$&0 \\
\hline
LS&$4.15\rm{E}-02$&$2.33\rm{E}-3$&$6.68\rm{E}-04$&$2.34\rm{E}-3$&811 \\
\hline
FM&$2.52\rm{E}-04$&$1.31\rm{E}-6$& $1.46\rm{E}-06$&$1.28\rm{E}-6$ &940 \\
\hline
\end{tabular}
\end{table}

\begin{table}[H]
\centering
\caption{$k=100$, $N=736$}\label{table8.12}
\begin{tabular}
{|p{0.8cm}<{\centering}|p{1.8cm}<{\centering}|p{1.6cm}<{\centering}
 |p{1.8cm}<{\centering}|p{1.6cm}<{\centering}|p{1.6cm}<{\centering}|}
\hline
\multirow{2}*{}&\multicolumn{2}{|c|}{$P^3_1$}&\multicolumn{2}{|c|}{$P^3_2$}&\multirow{2}*{p-time(s)}\\
\cline{2-5}
             &r-error& c-time(s)&r-error&c-time(s)&\\
\hline
EM&$2.59\rm{E}-04$&$5.32$&$2.75\rm{E}-06$&$6.44$&0 \\
\hline
LS&$7.79\rm{E}-01$&$1.62\rm{E}-3$&$6.23\rm{E}-03$&$1.63\rm{E}-3$&828 \\
\hline
FM&$1.31\rm{E}-04$&$1.29\rm{E}-6$& $3.64\rm{E}-06$&$1.30\rm{E}-6$ &3322 \\
\hline
\end{tabular}
\end{table}

From the tables above, we see that the Ewald's method is able to achieve any required accuracy 
and does not have any preparation time. However, the calculation time taken up by this method grows significantly 
as $k$ increases. Similar to the 2D case, the lattice sums also suffer from the problems of convergence
when $k$ is relatively larger. The calculation time of this method is much less than that of the Ewald method,
but still more than that of the FFT-based method. The FFT-based method is
the fastest one and works well for all $k$'s with a dependent calculation time. However, when $k$ increases, 
a larger $N$ is required for the same accuracy, which leads to a significant growth in the preparation time 
and memory. Thus, when a small number of values or high accuracies are needed,
the Ewald's method is the best choice, while, if a large amount of evaluations is required, 
the FFT-based method is more competitive.

\subsection{Numerical examples for $\mathbb{G}(x)$}\label{sec10.3D}
\setcounter{equation}{0}

In this section, we present some numerical examples for the calculation of $\mathbb{G}(x)$ by Algorithm \ref{alg3.3D}.
We will present six numerical examples to show the efficiency of this algorithm. The maximum relative error
($\max_{j,l=1,2,3}\frac{|\mathbb{G}_{jl}(x)-\tilde{\mathbb{G}}_{jl}(x)|}{|\mathbb{G}_{jl}(x)|}$),
is calculated at each point, and the values for $N=32,64,128,256,512$ are shown 
in Tables \ref{table10.1}-\ref{table10.6}.

\begin{table}[H]
\centering
\caption{$k=1$, $\alpha_1=0.1$, $\alpha_2=0.2$}\label{table10.1}
\begin{tabular}
{|p{1.8cm}<{\centering}|p{1.8cm}<{\centering}|p{1.8cm}<{\centering}
|p{1.8cm}<{\centering}|p{1.8cm}<{\centering}|}
\hline
 & $P^3_1$ & $P^3_2$ & $P^3_3$ & $P^3_4$\\
\hline
$N=32$&$1.09\rm{E}-02$&$2.65\rm{E}-06$&$3.42\rm{E}-01$&$4.18\rm{E}-04$\\
\hline
$N=64$&$9.00\rm{E}-06$&$6.96\rm{E}-08$&$2.49\rm{E}-04$&$8.82\rm{E}-06$\\
\hline
$N=128$&$2.50\rm{E}-06$&$3.04\rm{E}-10$&$6.99\rm{E}-06$&$1050\rm{E}-08$\\
\hline
$N=256$&$5.61\rm{E}-07$&$1.12\rm{E}-11$&$3.61\rm{E}-07$&$3.55\rm{E}-11$\\
\hline
$N=512$&$4.71\rm{E}-07$&$5.14\rm{E}-12$&$9.90\rm{E}-09$&$1.48\rm{E}-11$\\
\hline
\end{tabular}
\end{table}

\begin{table}[H]
\centering
\caption{$k=5$, $\alpha_1=0.1$, $\alpha_2=0.2$}\label{table10.2}
\begin{tabular}
{|p{1.8cm}<{\centering}|p{1.8cm}<{\centering}|p{1.8cm}<{\centering}
 |p{1.8cm}<{\centering}|p{1.8cm}<{\centering}|}
\hline
 & $P^3_1$ & $P^3_2$ & $P^3_3$ & $P^3_4$\\
\hline
$N=32$&$6.92\rm{E}-03$&$6.25\rm{E}-05$&$4.12\rm{E}-02$&$3.86\rm{E}-04$\\
\hline
$N=64$&$8.84\rm{E}-05$&$2.39\rm{E}-06$&$8.85\rm{E}-05$&$5.56\rm{E}-06$\\
\hline
$N=128$&$2.49\rm{E}-05$&$4.93\rm{E}-07$&$2.42\rm{E}-05$&$1.16\rm{E}-06$\\
\hline
$N=256$&$3.81\rm{E}-06$&$1.71\rm{E}-08$&$3.68\rm{E}-06$&$4.07\rm{E}-08$\\
\hline
$N=512$&$1.26\rm{E}-07$&$6.19\rm{E}-09$&$9.24\rm{E}-08$&$1.44\rm{E}-08$\\
\hline
\end{tabular}
\end{table}

\begin{table}[H]
\centering
\caption{$k=10$, $\alpha_1=0.8$, $\alpha_2=\sqrt{2}$}\label{table10.3}
\begin{tabular}
{|p{1.8cm}<{\centering}|p{1.8cm}<{\centering}|p{1.8cm}<{\centering}
 |p{1.8cm}<{\centering}|p{1.8cm}<{\centering}|}
\hline
 & $P^3_1$ & $P^3_2$ & $P^3_3$ & $P^3_4$\\
\hline
$N=32$&$1.48\rm{E}-01$&$1.00\rm{E}-03$&$1.36\rm{E}-01$&$2.74\rm{E}-03$\\
\hline
$N=64$&$2.12\rm{E}-03$&$4.24\rm{E}-05$&$2.24\rm{E}-03$&$1.31\rm{E}-04$\\
\hline
$N=128$&$4.41\rm{E}-04$&$3.09\rm{E}-06$&$4.56\rm{E}-04$&$1.16\rm{E}-05$\\
\hline
$N=256$&$6.68\rm{E}-05$&$1.48\rm{E}-07$&$6.91\rm{E}-05$&$4.63\rm{E}-07$\\
\hline
$N=512$&$1.53\rm{E}-06$&$4.23\rm{E}-08$&$1.60\rm{E}-06$&$1.59\rm{E}-07$\\
\hline
\end{tabular}
\end{table}

\begin{table}[H]
\centering
\caption{$k=25$, $\alpha_1=0.8$, $\alpha_2=\sqrt{2}$}\label{table10.4}
\begin{tabular}
{|p{1.8cm}<{\centering}|p{1.8cm}<{\centering}|p{1.8cm}<{\centering}
 |p{1.8cm}<{\centering}|p{1.8cm}<{\centering}|}
\hline
 & $P^3_1$ & $P^3_2$ & $P^3_3$ & $P^3_4$\\
\hline
$N=64$&$4.53\rm{E}-02$&$1.51\rm{E}-03$&$4.47\rm{E}-02$&$6.22\rm{E}-03$\\
\hline
$N=128$&$4.62\rm{E}-03$&$6.64\rm{E}-05$&$5.12\rm{E}-03$&$2.54\rm{E}-04$\\
\hline
$N=256$&$6.93\rm{E}-04$&$6.34\rm{E}-06$&$7.70\rm{E}-04$&$2.62\rm{E}-05$\\
\hline
$N=512$&$1.32\rm{E}-05$&$6.67\rm{E}-07$&$1.52\rm{E}-05$&$2.74\rm{E}-06$\\
\hline
\end{tabular}
\end{table}

\begin{table}[H]
\centering
\caption{$k=50$, $\alpha_1=\sqrt{3}$, $\alpha_2=0.5$}\label{table10.5}
\begin{tabular}
{|p{1.8cm}<{\centering}|p{1.8cm}<{\centering}|p{1.8cm}<{\centering}
 |p{1.8cm}<{\centering}|p{1.8cm}<{\centering}|}
\hline
 & $P^3_1$ & $P^3_2$ & $P^3_3$ & $P^3_4$\\
\hline
$N=128$&$3.20\rm{E}-02$&$7.35\rm{E}-04$&$2.78\rm{E}-02$&$1.69\rm{E}-03$\\
\hline
$N=256$&$5.28\rm{E}-03$&$5.93\rm{E}-05$&$4.55\rm{E}-03$&$1.65\rm{E}-04$\\
\hline
$N=512$&$2.69\rm{E}-04$&$6.80\rm{E}-06$&$2.43\rm{E}-04$&$1.21\rm{E}-05$\\
\hline
$N=736$&$1.09\rm{E}-04$&$8.00\rm{E}-07$&$9.37\rm{E}-05$&$1.43\rm{E}-06$\\
\hline
\end{tabular}
\end{table}

\begin{table}[H]
\centering
\caption{$k=100$, $\alpha_1=\sqrt{3}$, $\alpha_2=0.5$}\label{table10.6}
\begin{tabular}
{|p{1.8cm}<{\centering}|p{1.8cm}<{\centering}|p{1.8cm}<{\centering}
 |p{1.8cm}<{\centering}|p{1.8cm}<{\centering}|}
\hline
 & $P^3_1$ & $P^3_2$ & $P^3_3$ & $P^3_4$\\
\hline
$N=256$&$2.00\rm{E}-02$&$1.90\rm{E}-03$&$4.68\rm{E}-02$&$1.46\rm{E}-02$\\
\hline
$N=512$&$2.22\rm{E}-03$&$4.13\rm{E}-05$&$1.54\rm{E}-03$&$4.73\rm{E}-04$\\
\hline
$N=736$&$2.35\rm{E}-04$&$4.69\rm{E}-06$&$5.88\rm{E}-04$&$4.30\rm{E}-05$\\
\hline
\end{tabular}
\end{table}

From the examples above, similar results can be concluded as those in Subsection \ref{sec8.3D.1}.
We do not show the preparation and calculation time as they are both about six times longer
than those taken by Algorithm \ref{alg2.3D}. This implies that our method for the calculation
of $\mathbb{G}(x)$ is still highly efficient, especially when a large number of values are needed.

\section*{Acknowledgements}

This work was partly supported by the NNSF of China grants 91430102 and 91630309.
We thank the referees for their constructive comments which improved this paper.

\end{document}